\def\R{\mathbb{R}}
\def\cA{\mathcal{A}}
\def\cF{\mathcal{F}}
\def\cI{\mathcal{I}}
\def\cN{\mathcal{N}}
\def\cP{\mathcal{P}}
\def\cR{\mathcal{R}}
\def\a{\alpha}
\def\b{\beta}
\def\g{\gamma}
\def\d{\delta}
\def\k{\kappa}
\def\l{\lambda}
\def\p{\partial}
\def\veps{\varepsilon}
\def\O{\Omega}
\def\G{\Gamma}
\def\wto{\rightharpoonup}
\def\ou{\overline{u}}
\def\hu{\widehat{u}}
\def\tu{\widetilde{u}}
\newcommand{\dv}[1]{\,{\mathrm d}#1}
\newcommand{\wcheck}[1]{#1\hspace{-.8ex}\mbox{\huge {\lower.45ex \hbox{$\textstyle \check{}$}}} \hspace{.5ex}}
\let\oldmarginpar\marginpar
\renewcommand\marginpar[1]{
  \oldmarginpar[\raggedleft\footnotesize #1]
  {\raggedright\footnotesize #1}}
\newtheorem{definition}{Definition}
\newtheorem{proposition}[definition]{Proposition}
\newtheorem{corollary}[definition]{Corollary}
\newtheorem{remark}[definition]{Remark}
\newtheorem{remarks}[definition]{Remarks}
\newtheorem{example}[definition]{Example}
\newtheorem{algorithm}[definition]{Algorithm}
\newtheorem{assumption}[definition]{Assumption}
\numberwithin{definition}{section}
\definecolor{modmag}{RGB}{179,0,229}
\def\tL{\widetilde{L}}
\def\bc{{\rm bc}}
\begin{document}
\title[Simulation of constrained curves]{Simulation of
constrained elastic curves and application to a conical sheet
indentation problem}
\author{S\"oren Bartels}
\date{\today}
\renewcommand{\subjclassname}{%
\textup{2010} Mathematics Subject Classification}
\subjclass[2010]{65N12 63N30 74K10}
\begin{abstract}
We consider variational problems that model the bending behavior
of curves that are constrained to belong to given hypersurfaces.
Finite element discretizations of corresponding functionals are 
justified rigorously via
$\G$-convergence. The stability of semi-implicit discretizations
of gradient flows is investigated which provide a practical
method to determine stationary configurations. A particular 
application of the considered models arises in the description 
of conical sheet deformations. 
\end{abstract}
 
\keywords{Elasticity, rods, surfaces, discretization, conical sheets}

\maketitle

\begin{center}
\textsc{Dedicated to the memory of John W. Barrett}  
\end{center} 

\section{Introduction}\label{sec:intro}
The elastic flow of curves has attracted considerable attention  
among applied and numerical analysts within the last decades, cf., e.g., 
\cite{LanSin85,DzKuSc02,DaLiPo14} for analytical results, 
and \cite{DeDzEl05,BGN08,DeDz09,BGN10,BGN11,BGN12,Bart13,PozSti17,
BRR18,BGN19} for results concerning the discretization.
Corresponding applications
occur in the modeling of phase transitions, the description of 
large deformations of elastic rods and ribbons~\cite{AudPom10-book}, and prediction of 
prefered shapes of molecules~\cite{ChGoMa06,CS1}. For the class of inextensible curves, 
which arise naturally as dimensionally reduced descriptions 
in nonlinear elasticity~\cite{antman,MorMul03}, recent
developments concerning the numerical treatment of partial differential
equations with holonomic constraints such as harmonic maps turned out
be useful for their efficient approximation, cf.~\cite{Bart05,BBFP07,Bart16}. 
In this article we 
consider curves that are restricted to belong to given surfaces
and whose behavior is determined by appropriate bending energies. 
To model their relaxation dynamics and find stationary configurations
of low energy we adapt techniques developed in~\cite{Bart13} to 
develop convergent finite element discretizations and stable iterative
numerical schemes. Our approach provides an alternative to the 
methods developed in~\cite{BGN12,BGN19}. Here, motivated by applications
in nonlinear elasticity, we consider curves in 
euclidean space that are parametrized by arclength which allows for
an efficient numerical treatment. Related analytical contributions
are contained in \cite{Linn91,Kois96}.

\bigskip

\subsection{Constrained nonlinear bending} 
We first consider relaxation processes of curves $u$ on a
given surface $S$ whose bending behavior is determined 
by the functional
\[
I[u] = \frac12 \int_0^L |u''|^2 \dv{x}.
\]
Here, we require $u:(0,L)\to \R^3$ to be an arclength parametrized curve, i.e., that
$|u'(x)|=1$ for all $x\in (0,L)$, so that $|u''|^2$ is the squared 
curvature of the curve parametrized by the function~$u$. The constraint
\[
u(x) \in S 
\]
for all $x\in (0,L)$ restricts the curve to belong to the regular hypersurface 
$S\subset \R^3$. We also incorporate boundary conditions modeled
by a bounded and linear 
functional $L_\bc:H^2(\O;\R^3)\to \R^\ell$ and a vector $\ell_\bc\in \R^\ell$.
The setting may describe the behavior of a wire 
on a magnetic surface neglecting effects related to twist. Corresponding
torsion contributions can however be directly included, cf.~\cite{BarRei19-pre}. 
We thus consider the following constrained minimization problem.
\[\label{prob:bend}
\tag{$\textrm{P}_{\textrm{bend}}$} 
\left\{\begin{array}{l}
\text{Find a minimizing curve $u \in H^2(0,L;\R^3)$ for }   \\
\ \displaystyle{ I[u] = \frac12 \int_0^L |u''|^2 \dv{x}}  \\[3mm]
\text{subject to } 
\displaystyle{ u(x)\in S, \, |u'(x)|^2 =1} 
\text{ for all $x\in [0,L]$} \\[3mm]
\text{and } L_\bc[u] = \ell_\bc.
\end{array}\right.
\]
For an initial configuration described by a 
function~$u_0$ and for given boundary conditions, e.g., that the wire is
clamped at one end, the relaxation of the bending energy is modeled
by the formal gradient flow evolution
\[
\p_t u = - I'[u] + (\l u')' + \mu \Phi_S'(u)
\]
for a family of curves $(u(t))_{t\in [0,T]}$ satisfying the 
the initial, holonomic, and boundary conditions 
\[
u(0) = u_0, \quad |u'|^2 = 1, \quad \Phi_S(u) = 0, \quad L_\bc[u] = \ell_\bc.
\]
The functions~$\l$ and~$\mu$ are Lagrange multipliers related to the 
arclength and surface constraints, respectively, where we assume that
the surface~$S$ is given as the zero level set of the function~$\Phi_S$.
With the backward difference quotient operator 
\[
d_t a^k = \frac1\tau(a^k-a^{k-1})
\]
we use a time-stepping scheme that linearizes the constraints at a
previous approximation. By restricting to test functions that belong to
the intersection of the kernels of the linearized constraints
this eliminates the explicit occurence of the
Lagrange multipliers. Since the time-derivative obeys the same 
linear constraints we obtain for an appropriate inner product $(\cdot,\cdot)_*$
and the $L^2$ inner product $(\cdot,\cdot)$
the time-stepping scheme
\[
(d_t u^k,v)_* + ([u^k]'',v'') = 0
\]
subject to the inclusions
\[
d_t u^k, \, v \in \cF[u^{k-1}],
\]
where the set $\cF[u^{k-1}]$ contains the linearized constraints, i.e., 
for a given curve $\hu$ we have
\[
\cF[\hu] = \big\{ v\in H^2(0,L;\R^3): \hu'\cdot v = 0, \, \Phi_S'(\hu)\cdot v = 0, \,
L_\bc[v] = 0 \big\}.
\]
The time-stepping scheme thus requires solving linearly constrained linear
systems of equations, where the constraints are pointwise. We show that the scheme
is unconditionally energy decreasing and that the violation of the
constraints is controlled by the step size independently of the number of
iterations. Our spatial discretization
uses an $H^2$-conforming ansatz and imposes the constraints at the nodes
of a partitioning of the reference interval~$(0,L)$. We justify the
spatial discretization by proving its $\Gamma$-convergence to the continuous
minimization problem. 

\subsection{Geodesic curvature}
An intrinsic variant of the constrained variational problem arises, e.g., in
the description of phase separation processes on surfaces. It replaces
the curvature $\k = |u''|$ by the geodesic curvature $\k_g$. 
For an arclength parametrized curve $u:(0,L)\to S$ it is defined as
\[
\k_g^2 = |u''|^2 - |u''\cdot n_S(u)|^2 =   |u'' \times n_S(u)|^2,
\]
where $n_S= \Phi_S'/|\Phi_S'|$ is a unit normal field on $S$
and where we used that $u''\cdot u' = 0$. 
The corresponding energy functional 
\[
I[u] = \frac12 \int_\O \k_g^2 \dv{s}
\]
still controls the $H^2$ norm of $u$ since the normal part of the curvature
is bounded by the curvature of $S$, i.e., we have
\[
|u''|^2 \le \k_g^2 + c_S^2,
\]
where $c_S$ is the maximum of the principal curvatures of $S$.
This estimate is not availabe when only nodal values of a piecewise
polynomial curve $u_h$ belong to $S$. To cope with this aspect we
introduce a stabilization via a damping parameter $\g\le 1$ in the energy 
functional.
\[\label{prob:geod}
\tag{$\textrm{P}_{\textrm{geod}}^\g$} 
\left\{\begin{array}{l}
\text{Find a minimizing curve $u \in H^2(0,L;\R^3)$ for } \\ 
\ \displaystyle{ I_\g[u] = \frac12 \int_0^L |u''|^2 
- \g |u''\cdot n_S(u)|^2 \dv{x}}  \\[3mm]
\text{subject to } 
\displaystyle{ u(x)\in S, \, |u'(x)|^2 =1} 
\text{ for all $x\in [0,L]$} \\[3mm]
\text{and } L_\bc[u] = \ell_\bc.
\end{array}\right.
\]
We prove that the stabilized problems converge in a variational
sense to the unstabilized original problem as $\g\to 1$.
The stabilization allows us to prove convergence of discretizations.
As an alternative to or in combination with
stabilizations additional constraints may be imposed
to ensure that discrete curves remain sufficiently close to the surface
$S$ so that their second derivative in normal direction is controlled by
the curvature of the surface. This approach however leads to difficulties in 
the iterative solution. For the stabilized problem we follow the ideas
described above with an explicit treatment of the nonlinear term. Hence,
we compute a sequence $(u^k)_{k=0,1,\dots}$ via the recursion
\[\begin{split}
(d_t u^k&,v)_*  + ([u^k]'',v'')  \\
& = \g \big([u^{k-1}]''\cdot n_S(u^{k-1}),
  v ''\cdot n_S(u^{k-1}) + [u^{k-1}]''\cdot n_S'(u^{k-1}) v \big)
\end{split}\]
subject to $d_t u^k,v\in \cF[u^{k-1}]$. Under moderate conditions
on the step size $\tau$ in terms of $\g$ we obtain a monotonicity
property for the iteration. 

\subsection{Conical sheets}
Motivated by the problem of understanding folding and crumpling
deformations of thin elastic sheets, the 
articles~\cite{CerMah05,BKN13,MulOlb14,Olb16,FigMoo18}   
address the situation in which an elastic plate is placed on a circular obstacle 
of radius $r$ and then indented
by an amount $\d$ at the center $C$. The resulting deformation is 
homogeneous along rays starting from the center, points at a distance 
$(r^2+\d^2)^{1/2}$ from the center 
are either in contact with the obstacle or above it. The displacement of these points 
entirely determines the full deformation of the sheet and it therefore
suffices to compute the deformation of the points belonging to this circle. 
The displaced points belong to a sphere and are constrained by the obstacle. 
By an appropriate rescaling we may assume that $r^2 + \d^2 = 1$. A 
cross section of the rotationally symmetric setting through the center $C$ is depicted
in Figure~\ref{fig:sheet_indent_sketch}. The solutions of the two-dimensional
problem and its one-dimensional reduction cannot be rotationally symmetric unless 
the indendation depth $\d$ is trivial. 

\begin{figure}
\input{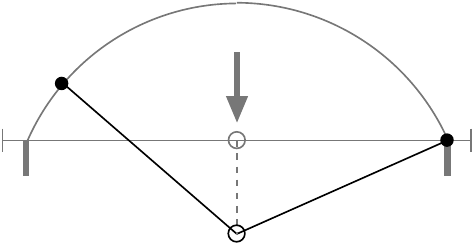_t}
\caption{\label{fig:sheet_indent_sketch} A point $C$ of an initially flat 
elastic sheet (gray line representing cross section) 
is displaced by a distance $\d$. The resulting deformation is
constrained by a circular obstacle at distance $r$ to $C$. Points on the 
deformed sheet (black lines)
at distance $(r^2+\d^2)^{1/2}$ to the center $C$ either touch the obstacle (right 
end point) or are above it (left end point).}
\end{figure}

The corresponding reduced 
description has been rigorously identified in \cite{FigMoo18} and characterizes
the deformation $u:S^1\to \R^3$ of the unit circle $S^1 \subset \R^2$ via a 
minimization of the functional 
\[
I[u] = \frac12 \int_{S^1} \k_g^2 \dv{x} 
\]
in the set of periodic curves $u\in H^2(S^1;\R^3)$
subject to the constraints that $u$ attains its values on the unit sphere
$S=S^2\subset \R^3$ and is inextensible, i.e.,
\[
|u(x)|^2 = 1, \quad |u'(x)|^2 = 1, 
\]
and that the curve does not penetrate the obstacle, i.e., for the vertical
component $u_3$ of $u$ we have 
\[
u_3(x) \ge \d,
\]
for all $x\in S^1$. Because of the unit-length constraints on $u$ and $u'$
we have that the normal curvature $\k_n$ of $u$ is given by
\[
\k_n = u''\cdot u = (u'\cdot u)' - |u'|^2 = -1,
\]
so that for the geodesic part we have
\[
\k_g^2 = \k^2 -\k_n^2 = |u''|^2 -1.
\]
The reduced indentation problem thus leads to the following minimization 
problem for a given indentation depth $\d \ge 0$.
\[\label{prob:indent}
\tag{$\textrm{P}_{\textrm{ind}}$}
\left\{ \begin{array}{l}
\text{Find a minimizing curve $u \in H^2(S^1;\R^3)$ for } \\
\ \displaystyle{ I[u] = \frac12 \int_{S^1} |u''|^2 \dv{s} - \pi}   \\[3mm]
\text{subject to } 
\displaystyle{ |u(x)|^2 = 1, \ |u'(x)|^2 =1, \ u_3(x) \ge \d} 
\text{ for all $x\in S^1$.}
\end{array}\right.
\]
Various features of minimizers have been characterized in~\cite{FigMoo18}, e.g., that
the non-contact zone $\{s\in S^1: u_3(s) > \d\}$ is an interval. Via 
less rigorous arguments it has been stated in~\cite{CerMah05} that minimizers have,
in a certain projection, a unique maximum, i.e., that single folds of the
indented sheet are preferred over double folds, as is observed in reality. 
To investigate such questions via numerical experiments we approximate the 
problem by imposing the inequality constraint using a
penalty approximation, i.e., we consider
\[
I_\veps[u] = \frac12 \int_{S^1} |u''|^2 \dv{x} 
+ \frac{1}{2\veps} \int_{S^1} (u_3-\d)_-^2 \dv{x} - \pi.
\]
The minimimization of $I_\veps$ is done with a gradient flow that linearizes
the constraints and which uses an implicit-explicit treatment of the
penalty term defined via the convex-concave splitting 
\[
(s-\d)_-^2 = (s-\d)^2 - (s-\d)_+^2
\]
i.e., we compute a sequence $(u^k)_{k=0,1,\dots}$ via 
\[
(d_t u^k,v)_* + ([u^k]'',v'') + \veps^{-1} (u_3^k-\d,v_3) 
=  \veps^{-1} ((u_3^{k-1}-\d)_+,v_3)
\]
for all $v\in H^2(S^1;\R^3)$ subject to the linearized unit-length constraints
and periodicity conditions contained in the space $\cF[u^{k-1}]$
\[
d_t u^k,v\in \cF[u^{k-1}].
\]
The resulting iterative method is unconditionally energy monotone and converges to
stationary configurations of low bending energy. 

\subsection{Outline}
The article is organized as follows. In Section~\ref{sec:fe_and_gamma}
we introduce the finite element spaces used to approximate $H^2$ curves
and prove $\G$-convergence results for the model problems. Section~\ref{sec:iterative} is
devoted to the development of stable gradient flow discretizations
used to compute stationary configurations. In Section~\ref{sec:experiments}
we illustrate the theoretical findings by numerical experiments. 


\section{Discretization and $\Gamma$-convergence }\label{sec:fe_and_gamma}

In this section we define suitable finite element spaces to approximate
curves, devise discretizations of the constrained minimization problems,
and prove their variational convergence as discretization parameters 
tend to zero. 

\subsection{Finite element spaces}
We discretize the constrained minimization problems using $H^2$ conforming
finite element spaces for partitions
\[
0 = z_0 < z_1 < \dots <z_J = L
\]
of the interval $(0,L)$ with maximal mesh size  
$h = \max_{j=1,\dots,J} |z_j-z_{j-1}|$ of the subintervals 
$I_j = [z_{j-1},z_j]$. A finite element space subordinated to this
partitioning is defined by imposing continuity and differentiability
of the piecewise cubic curves at the nodes, i.e., we set
\[
V_h = \{v_h \in C^1(0,L;\R^3): v_h|_{I_j} \in \cP_3(I_j)^3, \, j=1,2,\dots,J\},
\]
where $\cP_\ell(I)$ denotes the set of polynomials of maximal degree $\ell\ge 0$
on an interval~$I$.
The degrees of freedom in the space $V_h$ are the function values and
derivatives at the nodes, i.e., 
\[
\big(v_h(z_j),v_h'(z_j)\big)_{j=0,\dots,J}.
\]
Correspondingly, an interpolation operator $\cI_h^{3,1}: H^2(0,L;\R^3)
\to V_h$ is defined by requiring that
\[
\cI_h^{3,1} v(z_j) = v(z_j), \quad [\cI_h^{3,1}v]'(z_j) = v'(z_j)
\]
for $j=0,1,\dots,J$. 
We note that we have the interpolation estimates
\[
\|(\cI_h^{3,1} v - v)^{(k)}\|_{L^p(0,L)} \le c h^{3-k} \|v\|_{W^{3,p}(0,L)}
\]
for all $v\in W^{3,p}(0,L)$  and $k\le 2$,
cf.~\cite{BreSco08-book}.
We also employ the standard piecewise linear interpolation operator
\[
\cI_h: C^0([0,L]) \to W_h
\]
which is defined by requiring
\[
\cI_hw(z_j) = w(z_j)
\]
for $j=0,1,\dots,J$ and thereby defines an element in the space
\[
W_h = \{w_h\in C^0([0,L]): w_h|_{I_j} \in \cP_1(I_j), \, j=1,2,\dots,J\}.
\]
For the interpolation operator we have that
\[
\|\cI_h w - w \|_{L^p(0,L)} \le c h \|w'\|_{L^p(0,L)},
\]
if $p>1$. With the interpolation operator $\cI_h$ we define
discrete inner products and norms via
\[
(v,w)_h = \int_0^L \cI_h [vw] \dv{x}, \quad 
\|v\|_{L^p_h(0,L)}^p = \int_0^L \cI_h [|v|^p] \dv{x}
\]
for $v,w\in C([0,L];\R^\ell)$ and $1\le p\le \infty$, 
where $\|v\|_{L^\infty_h(0,L)} = \max_{j=0,\dots,J} |v(z_h)|$. 

\subsection{Discrete minimization problems}
The pointwise constraints and the nonlinearities require making
certain approxiomations which lead to inconsistency terms. We
impose the arclength condition and the surface constraints at 
the nodes of a partitioning, i.e., we impose that
\[
\cI_h |v_h'|^2 = 1, \quad \cI_h \Phi_S(v_h) = 0,
\]
which is equivalent to the nodal constraints
\[
|v_h'(z_j)| = 1, \quad v_h(z_j) \in S
\]
for $j=0,1,\dots,J$. 
The discrete set of admissible curves is then given by
\[
\cA_h = \big\{ v_h \in V_h : \cI_h|v_h'|^2 = 1, 
\, \cI_h \Phi_S(v_h) = 0, \, L_\bc[v_h] = \ell_\bc \big\}.
\]
It provides an approximation of the continuous set of
admissible curves defined as
\[
\cA = \big\{ v \in H^2(0,L;\R^3): |v'|^2=1, \, \Phi_S(v) =0, \,
L_\bc[v] = \ell_\bc \big\}.
\]
We note that if the continuous admissible set is 
nonempty then also the discrete admissible set is nonempty, i.e.,
we have the implication
\[
v\in \cA \, \implies \, \cI_h^{3,1} v \in \cA_h,
\]
where we assume that $L_\bc[v]$ only depends on the boundary values
of $v$ and $v'$. Our convergence result considers the minimization of
\[
I_\g[u] = 
\begin{cases}
\displaystyle{\frac12 \int_0^L  |u''|^2 - \g |u''\cdot n_S(u)|^2 \dv{x}} & \mbox{for } u \in \cA, \\
+ \infty & \mbox{for } H^2(0,L;\R^3)\setminus \cA,
\end{cases}
\]
with a parameters $\g\in [0,1)$. 
The approximating discrete functionals are 
given by 
\[
I_{\g,h}[u_h] = 
\begin{cases}
\displaystyle{\frac12 \int_0^L  |u_h''|^2 - \g |u_h'' \cdot n_S (u_h)|^2 \dv{x}} & \mbox{for } u_h \in \cA_h, \\
+ \infty & \mbox{for } H^2(0,L;\R^3) \setminus \cA_h,
\end{cases}
\]
for $u_h\in \cA_h$ with the extension by $+\infty$ on
$H^2(0,L;\R^3)\setminus \cA_h$. To prove the convergence $I_{\g,h}\to I_\g$ 
we impose a definiteness property on the boundary condition operator $L_\bc$
and an approximability condition on $\cA$. 

\begin{assumption}[Definiteness]\label{ass:definite}
The seminorm $v\mapsto \|v''\|$ is a norm on the kernel of the operator
$L_\bc:H^2(0,L;\R^3)\to \R^\ell$. 
\end{assumption}

The assumption is satisfied for clamped boundary conditions,
e.g., $L_\bc[v] = (v(0),v'(0))$, and boundary conditions that
fix both end points, i.e., $L_\bc[v] = (v(0),v(L))$. We always assume that the
boundary conditions lead to a nonempty set $\cA$.

\begin{assumption}[Density of smooth curves]\label{ass:density}
The subset of smooth curves $\cA\cap H^3(0,L;\R^3)$ is
dense in $\cA$ with respect to strong convergence in $H^2$. 
\end{assumption}

A relaxation of the assumption
is discussed below in Remark~\ref{rem:relax_constraints}.
The assumption can be justified by regularizing curves in $\cA$,
projecting regular curves on $S$, adjusting the boundary conditions,
and carrying out a suitable reparametrization. We refer the
reader to~\cite{BarRei19-pre} for related ideas.

\begin{proposition}[$\G$-convergence]\label{prop:gamma_genera}
If $0\le \g<1$, $\Phi_S\in C^1(\R^3)$ and 
Assumptions~\ref{ass:definite} and~\ref{ass:density} are
satisfied then we have 
$I_{\g,h}\to I_\g$ in the sense of $\G$-convergence with respect to 
weak convergence in $H^2$, i.e., we have the following: \\
(i) If $(u_h)_{h>0} \subset H^2(0,L;\R^3)$ such that $u_h\in \cA_h$
for every $h>0$ and $I_{\g,h}[u_h]\le c$ then there exists 
$u\in \cA$ such that $u_h \wto u$ in $H^2$ and 
\[
I_\g[u] \le \liminf_{h\to 0} I_{\g,h}[u_h].
\]
(ii) For every $u\in \cA$ there exists a sequence 
$(u_h)_{h>0}\subset H^2(0,L;\R^3)$ such that $u_h \to u$ in
$H^2$ and 
\[
I_\g[u] = \lim_{h\to 0} I_{\g,h}[u_h].
\]
(iii) Weak accumulation points of sequences of quasiminimizers 
$(u_h)_{h>0}$ for the functionals $I_{\g,h}$ in $H^2$ are 
minimizers for $I_\g$.
\end{proposition}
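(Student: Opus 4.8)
\emph{Strategy and compactness.} I would argue along the classical three steps for $\Gamma$-convergence: a compactness estimate together with the $\liminf$ inequality (giving~(i)), the construction of a recovery sequence (giving~(ii)), and a short deduction of~(iii) from~(i) and~(ii). For the compactness, let $u_h\in\cA_h$ with $I_{\gamma,h}[u_h]\le c$. Since $|u_h''\cdot n_S(u_h)|\le|u_h''|$ and $\gamma<1$, the functional controls the seminorm, $\tfrac{1-\gamma}{2}\|u_h''\|^2\le I_{\gamma,h}[u_h]\le c$. From the nodal constraint $|u_h'(z_j)|=1$ and $\int_{I_j}|u_h''|\le h^{1/2}\|u_h''\|_{L^2(I_j)}$ one obtains $\|u_h'\|_{L^\infty}\le 1+ch^{1/2}$, and Assumption~\ref{ass:definite} yields the Poincar\'e-type estimate $\|v\|_{H^2}\le C(\|v''\|+|L_\bc[v]|)$; together these give $\|u_h\|_{H^2}\le C$ uniformly. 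Extracting a subsequence, $u_h\wto u$ in $H^2$ and $u_h\to u$ in $C^1([0,L];\R^3)$ by compact embedding. Because the mesh nodes become dense as $h\to0$, because $\Phi_S$ is continuous with $\Phi_S(u_h(z_j))=0$, and because $|u_h'(z_j)|=1$, the limit satisfies $\Phi_S(u)=0$ and $|u'|^2=1$ on $[0,L]$; weak continuity of $L_\bc$ gives $L_\bc[u]=\ell_\bc$, so $u\in\cA$.

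\emph{The $\liminf$ inequality.} Let $P_h$ denote the pointwise orthogonal projection onto the complement of $n_S(u_h)$, so that $|u_h''|^2-\gamma|u_h''\cdot n_S(u_h)|^2=(1-\gamma)\,|u_h''\cdot n_S(u_h)|^2+|P_hu_h''|^2$. Because $u_h''\wto u''$ in $L^2$ while $n_S(u_h)\to n_S(u)$ in $L^\infty$ (using $u_h\to u$ uniformly, $u(x)\in S$, and the regularity of $S$), both $u_h''\cdot n_S(u_h)$ and $P_hu_h''$ converge weakly in $L^2$, to $u''\cdot n_S(u)$ and $u''-(u''\cdot n_S(u))\,n_S(u)$ respectively. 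Applying weak lower semicontinuity of the $L^2$-norm to each of the two nonnegative terms and using superadditivity of the $\liminf$ gives $I_\gamma[u]\le\liminf_{h\to0}I_{\gamma,h}[u_h]$. I expect this to be the main obstacle: the integrand is not weakly lower semicontinuous on its own, and the decomposition above — valid only because $1-\gamma>0$ — is precisely what makes the argument work, so this is where $\gamma<1$ is used.

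\emph{Recovery sequence.} For a smooth curve $\tilde u\in\cA\cap H^3(0,L;\R^3)$ put $\tilde u_h=\cI_h^{3,1}\tilde u$. Then $\tilde u_h\in\cA_h$ by the implication $\cA\ni v\mapsto\cI_h^{3,1}v\in\cA_h$, and the interpolation estimate $\|(\cI_h^{3,1}\tilde u-\tilde u)''\|\le ch\|\tilde u\|_{H^3}$ gives $\tilde u_h\to\tilde u$ in $H^2$; since additionally $\tilde u_h''\to\tilde u''$ in $L^2$ and $n_S(\tilde u_h)\to n_S(\tilde u)$ uniformly, both quadratic terms pass to the limit and $I_{\gamma,h}[\tilde u_h]\to I_\gamma[\tilde u]$. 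The functional $I_\gamma$ is continuous along strongly $H^2$-convergent sequences in $\cA$ for the same reason, so Assumption~\ref{ass:density} lets an arbitrary $u\in\cA$ be approximated in $H^2$ by such $\tilde u$, and a diagonal argument (Attouch's lemma) produces $u_h\in\cA_h$ with $u_h\to u$ in $H^2$ and $I_{\gamma,h}[u_h]\to I_\gamma[u]$, which proves~(ii).

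\emph{Quasiminimizers.} Fixing one smooth $\tilde u_0\in\cA\cap H^3$, the sequence $\cI_h^{3,1}\tilde u_0\in\cA_h$ shows $\inf I_{\gamma,h}\le I_{\gamma,h}[\cI_h^{3,1}\tilde u_0]\le c$, so any sequence of quasiminimizers is energy-bounded and, by~(i), possesses weak accumulation points lying in $\cA$. If $u_h\wto u$ along a subsequence and $v\in\cA$ is arbitrary, testing against the recovery sequence $v_h$ from~(ii) gives $I_\gamma[u]\le\liminf I_{\gamma,h}[u_h]\le\liminf\big(I_{\gamma,h}[v_h]+o(1)\big)=I_\gamma[v]$, whence $u$ minimizes $I_\gamma$ over $\cA$, and hence over $H^2(0,L;\R^3)$.
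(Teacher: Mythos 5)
Your proposal is correct and follows essentially the same route as the paper: equicoercivity from $\gamma<1$, the orthogonal splitting of the integrand into nonnegative terms that are weakly lower semicontinuous along $u_h''\wto u''$ combined with the uniform convergence of $n_S(u_h)$, the interpolant $\cI_h^{3,1}$ of smooth admissible curves plus Assumption~\ref{ass:density} and a diagonal argument for the recovery sequence, and the standard deduction of~(iii). The only cosmetic differences are that your decomposition $(1-\gamma)|u_h''\cdot n_S(u_h)|^2+|P_hu_h''|^2$ is an equivalent rearrangement of the paper's identity~\eqref{eq:repr_proj}, and that you pass the nodal constraints to the limit via density of the nodes and $C^1$-convergence rather than via the interpolation error bounds for $\cI_h$; both are valid.
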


\begin{proof}
(i) If $I_{\g,h}[u_h] \le c$ for a sequence $(u_h)_{h>0}$ then,
since $\g<1$ and since
\begin{equation}\label{eq:repr_proj}
\begin{split}
|u_h''|^2 - \g  |u_h''\cdot & n_S(u_h)|^2  \\
& = (1-\g) |u_h''|^2 + \g \big|\big(I_3 - n_S(u_h)\otimes n_S(u_h)\big) u_h''\big|^2 ,
\end{split}
\end{equation}
we have that the sequence is bounded in $H^2(0,L;\R^3)$ 
and there exists a weak limit $u\in H^2(0,L;\R^3)$ of an 
appropriate subsequence which is not relabeled. 
The boundedness of the linear operator
$L_\bc[v]$ shows that we have $L_\bc[u]=\ell_\bc$.
The compactness of 
the embedding $H^2(0,L) \to W^{1,\infty}(0,L)$ implies
that the sequence $(u_h')_{h>0}$ is strongly convergent
in $L^\infty(0,L;\R^3)$.
Using that $\cI_{\g,h} |u_h'|^2 = 1$ we thus deduce that
\[\begin{split}
\big\||u_h'|^2-1\big\|_{L^2(0,L)} 
&= \big\||u_h'|^2-\cI_h|u_h'|^2\big\|_{L^2(0,L)} \\
&\le 2 ch \|u_h'\|_{L^\infty(0,L)} \| u_h''\|_{L^2(0,L)}^2,
\end{split}\]
which implies that $|u_h'|^2 \to 1$ in $L^2(0,L)$. We have that
\[\begin{split}
\|\Phi_S(u_h)\|_{L^\infty(0,L)} 
&= \|\Phi_S(u_h) - \cI_h \Phi_S(u_h)\|_{L^\infty(0,L)} \\
&\le c h \|\Phi_S'(u_h) u_h'\|_{L^\infty(0,L)}.
\end{split}\]
The pointwise convergence $u_h\to u$ and continuity of $\Phi_S$
imply that $\Phi_S(u)=0$ in $(0,L)$. Hence, we have that $u\in \cA$.
Since 
\[
P_{u_h} = I_3 - n_S(u_h)\otimes n_S(u_h) \to P_u = I_3 - n_S(u) \times n_S(u)
\]
strongly in $L^\infty(0,L;\R^{3\times 3})$ it follows that
$P_{u_h} u_h'' \wto P_u u''$ in $L^2(0,L;\R^3)$ and the 
weak lower semicontinuity of the $L^2$ norm in combination 
with the identity~\eqref{eq:repr_proj} shows that
\[
\int_0^L |u''|^2 - \g |u''\cdot n_S(u)|^2 \dv{x}
\le \liminf_{h\to 0} \int_0^L |u_h''|^2 - \g |u_h''\cdot n_S(u_h)|^2 \dv{x},
\]
i.e., that $I_\g[u]\le \liminf_{h\to 0} I_{\g,h}[u_h]$.  \\
(ii) Since $I_\g$ is continuous on $\cA$ with respect to 
strong convergence in $H^2$ and because of Assumption~\ref{ass:density}, we may 
assume that $u\in \cA \cap H^3(0,L;\R^3)$. Letting
$u_h = \cI_h^{3,1}u$ we have that $u_h \in \cA_h$,
$u_h \to u$ in $H^2$, and $I_\g[u]= \lim_{h\to 0} I_{\g,h}[u_h]$.  \\
(iii) The convergence of quasi-minimizers is an immediate
consequence of the equicoercivity of the functionals $I_{\g,h}$ 
owing to the condition $\g<1$ and assertions~(i) and~(ii).
\end{proof}

\begin{remark}\label{rem:relax_constraints}
To avoid Assumption~\ref{ass:density} one may impose the
arclength and surface constraints in a relaxed sense
in defining $\cA_h$, i.e., using
\[\begin{split}
\widetilde{\cA}_h & = \big\{ v_h \in V_h : , \, L_\bc[v_h] = \ell_\bc, \\
& \qquad \|\cI_h|v_h'|^2 - 1\|_{L^\infty(0,L)} \le \a_h, 
\, \| \cI_h \Phi_S(v_h)\|_{L^\infty(0,L)} \le \b_h \big\},
\end{split}\]
with $h$-dependent parameters $\a_h,\b_h>0$. In this case,
one may construct a recovery sequence $u_h$ in part (ii) 
of the Proposition by letting $\tu\in C^\infty(0,L;\R^3)$ 
be a regularization of $u\in \cA$ which obeys the boundary
conditions and define $u_h = \cI_h^{3,1} u$. If 
$\a_h,\b_h$ are appropriately chosen we have 
$u_h\in\widetilde{\cA}_h$ and $u_h \to u$ in $H^2$.
\end{remark}

\subsection{Application to model problems}
We next apply the abstract $\G$-convergence result to the model
problems defined by the variational problems~\eqref{prob:bend}, 
\eqref{prob:geod}, and~\eqref{prob:indent}. We assume throughout
the following that $\Phi_S\in C^1(\R^3)$ and that
Assumptions~\ref{ass:definite} and~\ref{ass:density} are satisfied
and always consider weak convergence in $H^2$.
The discretization of the constrained nonlinear bending 
problem~\eqref{prob:bend} is defined as:
\[\label{prob:bend_h}
\tag{$\textrm{P}_{\textrm{bend}}^h$} 
\left\{\begin{array}{l}
\text{Find a minimizing curve $u_h\in \cA_h$ for } \\
\ \displaystyle{I[u_h] = \frac12 \int_0^L |u_h''|^2 \dv{x}.}
\end{array}\right.
\] 
A convergence result is obtained from choosing $\g=0$ in
Proposition~\ref{prop:gamma_genera}.

\begin{corollary}[Constrained nonlinear bending]
The minimization problems~\eqref{prob:bend_h} approximate the
problem~\eqref{prob:bend} as $h\to 0$.
\end{corollary}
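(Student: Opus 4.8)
The plan is to obtain the corollary as the special case $\g=0$ of Proposition~\ref{prop:gamma_genera}. First I would observe that for $\g=0$ the stabilizing term $\g|u''\cdot n_S(u)|^2$ disappears, so that $I_0$ reduces to the functional equal to $\frac12\int_0^L|u''|^2\dv{x}$ on $\cA$ and to $+\infty$ on $H^2(0,L;\R^3)\setminus\cA$, which is exactly the objective of~\eqref{prob:bend}, and likewise $I_{0,h}$ reduces to the objective of~\eqref{prob:bend_h} extended by $+\infty$ outside $\cA_h$. Since the standing hypotheses of this subsection guarantee $\Phi_S\in C^1(\R^3)$ together with Assumptions~\ref{ass:definite} and~\ref{ass:density}, and since $0\in[0,1)$, Proposition~\ref{prop:gamma_genera} applies directly: it yields $I_{0,h}\to I_0$ in the sense of $\G$-convergence with respect to weak $H^2$ convergence, the equicoercivity of the family $(I_{0,h})_{h>0}$, and the fact that weak $H^2$ accumulation points of quasiminimizers minimize $I_0$.

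Next I would check that~\eqref{prob:bend_h} is actually solvable, so that the abstract statement can be applied to genuine discrete minimizers. The set $\cA_h$ is nonempty, because $v\in\cA$ implies $\cI_h^{3,1}v\in\cA_h$, and it is closed in the finite-dimensional space $V_h$, being defined by finitely many nodal equalities together with $L_\bc[v_h]=\ell_\bc$. For coercivity I would fix some $v_h^0\in\cA_h$; every $v_h\in\cA_h$ then satisfies $v_h-v_h^0\in\ker L_\bc$, so Assumption~\ref{ass:definite} bounds $\|v_h-v_h^0\|_{H^2}$ by $c\|(v_h-v_h^0)''\|$, and hence a bound on $\int_0^L|v_h''|^2\dv{x}$ controls $\|v_h\|_{H^2}$. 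Thus sublevel sets of $I$ on $\cA_h$ are compact and a minimizer $u_h$ exists by the direct method.

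Finally I would assemble the approximation statement. Taking minimizers $u_h$ of $I$ over $\cA_h$ for every $h>0$, equicoercivity makes $(u_h)_{h>0}$ bounded in $H^2$, so weak $H^2$ accumulation points exist, and part~(iii) of Proposition~\ref{prop:gamma_genera} identifies each of them as a minimizer of~\eqref{prob:bend}. To obtain convergence of the minimal energies I would pick a minimizer $u^\ast$ of~\eqref{prob:bend} and a recovery sequence $u_h^\ast\to u^\ast$ from part~(ii), which gives $\limsup_{h\to0}I[u_h]\le\lim_{h\to0}I[u_h^\ast]=I[u^\ast]=\min_{\cA}I$, while part~(i) applied along a weakly convergent subsequence of $(u_h)$ supplies the matching lower bound; hence $I[u_h]\to\min_{\cA}I$. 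This is precisely the claim that~\eqref{prob:bend_h} approximates~\eqref{prob:bend} as $h\to0$. Since the whole argument is a direct specialization, I do not anticipate a genuine obstacle; the only point needing a line of care is the finite-dimensional existence of discrete minimizers, where Assumption~\ref{ass:definite} supplies the coercivity required on the possibly non-convex admissible set $\cA_h$.
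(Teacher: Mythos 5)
Your proposal is correct and follows exactly the paper's route: the paper proves this corollary simply by specializing Proposition~\ref{prop:gamma_genera} to $\g=0$. The extra details you supply (existence of discrete minimizers via Assumption~\ref{ass:definite} and the convergence of the minimal energies) are standard consequences that the paper leaves implicit.
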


A discretization of the geodesic curvature minimization 
problem~\eqref{prob:geod} is defined as:
\[\label{prob:geod_h}
\tag{$\textrm{P}_{\textrm{geod}}^{\g,h}$} 
\left\{\begin{array}{l}
\text{Find a minimizing curve $u_h\in \cA_h$ for } \\
\ \displaystyle{I_{\g,h} [u_h] = \frac12 \int_0^L |u_h''|^2 - \g |u_h'' \cdot n_S(u_h)|^2 \dv{x}.}
\end{array}\right.
\] 
This problem approximates for fixed $0<\g<1$ the stabilized
problem~\eqref{prob:geod} which is a direct consequence of 
Proposition~\ref{prop:gamma_genera}. We also have that the
regularized minimization problems converge for $\g\to 1$ 
to the original, unstabilized problem defined with $\g=1$.

\begin{corollary}[Geodesic curvature minimization]
The minimization problems~\eqref{prob:geod_h} approximate
problem~\eqref{prob:geod} as $h\to 0$. For $\g\to 1$ 
problems~\eqref{prob:geod} approximate problem~\eqref{prob:geod}
with $\g=1$.
\end{corollary}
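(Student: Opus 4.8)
The first assertion needs no new argument: for a \emph{fixed} parameter $\g\in(0,1)$ the functional driving~\eqref{prob:geod_h} is exactly $I_{\g,h}$ and the one driving~\eqref{prob:geod} is exactly $I_\g$ as defined before Proposition~\ref{prop:gamma_genera}, so parts (i)--(iii) of that Proposition apply verbatim and give the claimed approximation. Hence the only thing to prove is the limit $\g\to1$, which I would phrase as a $\G$-convergence statement for the continuous family $I_\g$ towards $I_1:=I_\g|_{\g=1}$ with respect to weak convergence in $H^2$, together with an equicoercivity estimate that is uniform in $\g$ and upgrades the $\G$-convergence to convergence of (quasi-)minimizers.

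For the $\liminf$-inequality the plan is to use the monotonicity of $I_\g$ in $\g$: the projection identity~\eqref{eq:repr_proj} rewrites $I_\g[u]=I_1[u]+\tfrac{1-\g}{2}\int_0^L|u''\cdot n_S(u)|^2\dv{x}$, so $I_\g[u]\ge I_1[u]$ for every $\g\le1$ and every $u\in\cA$. If $\g_j\to1$ and $u_j\in\cA$ with $u_j\wto u$ in $H^2$, then $u\in\cA$ since $\cA$ is weakly closed ($u_j\to u$ in $C^1$ by compact embedding, which preserves $|u'|^2=1$ and, by continuity of $\Phi_S$, the inclusion $u(x)\in S$, while $L_\bc$ is weakly continuous). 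Exactly as in part~(i) of the proof of Proposition~\ref{prop:gamma_genera} with $\g=1$, the projections $P_{u_j}=I_3-n_S(u_j)\otimes n_S(u_j)$ converge to $P_u$ in $L^\infty$, hence $P_{u_j}u_j''\wto P_u u''$ in $L^2$, and weak lower semicontinuity of the $L^2$-norm yields $I_1[u]\le\liminf_j I_1[u_j]\le\liminf_j I_{\g_j}[u_j]$. The recovery sequence is the constant one $u_j=u\in\cA$: since $u''\in L^2$ and $n_S$ is bounded, $I_{\g_j}[u]=I_1[u]+\tfrac{1-\g_j}{2}\int_0^L|u''\cdot n_S(u)|^2\dv{x}\to I_1[u]$, and no boundary correction is needed because all $I_\g$ share the admissible set $\cA$.

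The step where I expect the actual work is the \emph{uniform} coercivity as $\g\uparrow1$, because the obvious bound $I_\g[u]\ge\tfrac{1-\g}{2}\|u''\|_{L^2}^2$ degenerates. This is rescued by the geometric estimate $|u''|^2\le\k_g^2+c_S^2$ valid on $\cA$, i.e. the pointwise bound of the normal curvature of a curve on $S$ by the maximal principal curvature $c_S$: for $u\in\cA$,
\[
\|u''\|_{L^2}^2=\|\k_g(u)\|_{L^2}^2+\|u''\cdot n_S(u)\|_{L^2}^2\le 2I_1[u]+c_S^2 L\le 2I_\g[u]+c_S^2 L ,
\]
so a uniform bound on $I_{\g_j}[u_j]$ forces a uniform bound on $\|u_j''\|_{L^2}$, whereupon Assumption~\ref{ass:definite} (equivalence of $v\mapsto\|v''\|$ with the $H^2$-norm on the affine constraint $L_\bc[v]=\ell_\bc$) bounds $(u_j)$ in $H^2$ independently of $j$. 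With this equicoercivity the standard $\G$-convergence argument used for part~(iii) of Proposition~\ref{prop:gamma_genera} shows that weak accumulation points of quasiminimizers of $I_{\g_j}$ are minimizers of $I_1$ and that $\min I_{\g_j}\to\min I_1$; together with the first assertion this is precisely the stated approximation property. The only ingredient used beyond the standing hypotheses is that $S$ be regular enough for $c_S<\infty$ (e.g.\ $S$ compact, or $\Phi_S\in C^{1,1}$), which is implicit in the estimate $|u''|^2\le\k_g^2+c_S^2$ already recorded above.
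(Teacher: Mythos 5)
Your proposal is correct and follows essentially the same route as the paper: the first assertion is read off from Proposition~\ref{prop:gamma_genera}, and the limit $\g\to 1$ hinges on the pointwise bound $|u''|^2\le\k_g^2+c_S^2$ for arclength-parametrized curves lying exactly on $S$, which is precisely the single ingredient the paper's (very terse) proof invokes for uniform coercivity. Your write-up merely supplies the details (monotonicity of $I_\g$ in $\g$, constant recovery sequence, weak closedness of $\cA$) that the paper leaves implicit.
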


\begin{proof}
The first part follows from Proposition~\ref{prop:gamma_genera}.
To prove the second part one uses that second derivatives of
arclength-parametrized curves on $S$ are bounded by their geodesic
curvature. 
\end{proof}

\begin{remarks}
(i) For an efficient numerical realization it is helpful to replace 
the function $u_h''\cdot n_S(u_h)$ by $u_h''\cdot n_S(\ou_h)$,
where $\ou_h$ is a piecewise constant approximation 
of $u_h$. The approximation result remains valid if
$u_h - \ou_h \to 0$ in $L^\infty(0,L;\R^3)$ for every 
bounded seqence $(u_h)_{h>0}$ in $H^2(0,L;\R^3)$, e.g., if
$\ou_h$ is defined via the midpoint values of $u_h$. \\
(ii) A modification of the method is necessary to justify
a joint limit passage $(h,\g) \to (0,1)$. In particular, control
on the normal part of $u_h''$ is needed, e.g., via requiring
that $u_h'(z_j)$ is a tangent vector at every node $z_j$, 
$j=0,1,\dots,J$. 
\end{remarks}

A discretization of the sheet indentation 
problem~\eqref{prob:indent} is defined as:
\[\label{prob:indent_h}
\tag{$\textrm{P}_{\textrm{ind}}^{h,\veps}$} 
\left\{\begin{array}{l}
\text{Find a minimizing curve $u_h\in \cA_h$ for } \\
\ \displaystyle{I_{h,\veps}[u_h] = \frac12 \int_0^L |u_h''|^2 
+ \frac1{2\veps} \int_0^L \cI_h (u_{3,h}-\d)_-^2\dv{x}.}
\end{array}\right.
\] 
A convergence result is obtained from choosing $\g=0$ in
Proposition~\ref{prop:gamma_genera} and showing that the
penalty term turns into a rigid constraint as $(h,\veps)\to 0$. 

\begin{corollary}[Constrained nonlinear bending]
Assume that Assumption~\ref{ass:density} holds with $\cA$ 
replaced by the set of functions $u\in \cA$ with $u_3 \ge \d$.
Then the minimization problems~\eqref{prob:indent_h} approximate the
problem~\eqref{prob:indent} as $(h,\veps)\to 0$.
\end{corollary}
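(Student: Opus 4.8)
The plan is to deduce the statement from Proposition~\ref{prop:gamma_genera} with $\g=0$, which already accounts for the bending term, the arclength constraint $|u'|^2=1$, and the spherical constraint $\Phi_S(u)=|u|^2-1=0$ (with the admissible choice $\Phi_S(x)=|x|^2-1\in C^1(\R^3)$) together with the periodic side conditions, and then to complement it by a penalty-to-obstacle argument for the inequality $u_3\ge\d$. One preliminary remark is needed: for \eqref{prob:indent} the operator $L_\bc$ encodes only periodicity, so Assumption~\ref{ass:definite} does not literally apply; coercivity is however recovered from the pointwise constraint $u_h(z_j)\in S^2$, which forces $\|u_h\|_{L^\infty(0,L)}\le1$, so that a uniform bound on $\|u_h''\|_{L^2(0,L)}$ still yields a uniform $H^2$ bound and the functionals $I_{h,\veps}$ stay equicoercive (alternatively one invokes Remark~\ref{rem:relax_constraints}). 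The additive constant $-\pi$ is irrelevant for the minimization and is suppressed below.

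For the $\liminf$ inequality I would take a sequence $(h_n,\veps_n)\to(0,0)$ and $u_n\in\cA_{h_n}$ with $\sup_n I_{h_n,\veps_n}[u_n]\le c$. Since the penalty term is nonnegative, $\|u_n''\|_{L^2(0,L)}$ is bounded, hence $(u_n)$ is bounded in $H^2$ and, along a subsequence, $u_n\wto u$ in $H^2$ with $u_n\to u$ in $W^{1,\infty}(0,L)$. Exactly as in Proposition~\ref{prop:gamma_genera}(i) — using $\cI_{h_n}|u_n'|^2=1$, $\cI_{h_n}\Phi_S(u_n)=0$, the interpolation estimates for $\cI_h$, continuity of $\Phi_S$, and weak lower semicontinuity — one obtains $|u'|^2=1$, $|u|^2=1$, periodicity, and $\tfrac12\int_0^L|u''|^2\dv x\le\liminf_n\tfrac12\int_0^L|u_n''|^2\dv x$. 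To pass the obstacle constraint to the limit, set $w_n=(u_{3,n}-\d)_-^2$; then $w_n$ and $w_n'=2(u_{3,n}-\d)_-u_{3,n}'$ converge uniformly, so $\|\cI_{h_n}w_n-w_n\|_{L^2(0,L)}\le c h_n\|w_n'\|_{L^2(0,L)}\to0$ and therefore $\int_0^L\cI_{h_n}w_n\dv x\to\int_0^L(u_3-\d)_-^2\dv x$. The energy bound gives $\int_0^L\cI_{h_n}w_n\dv x\le C\veps_n\to0$, whence $\int_0^L(u_3-\d)_-^2\dv x=0$, i.e.\ $u_3\ge\d$. Thus $u$ is admissible for \eqref{prob:indent} and $I[u]\le\liminf_n I_{h_n,\veps_n}[u_n]$.

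For the recovery sequence, fix $u$ admissible for \eqref{prob:indent}. By the hypothesis that Assumption~\ref{ass:density} holds for $\{v\in\cA:v_3\ge\d\}$, together with continuity of $v\mapsto\tfrac12\int_0^L|v''|^2\dv x$ on $\cA$ in $H^2$, it suffices to treat $u\in\cA\cap H^3(0,L;\R^3)$ with $u_3\ge\d$. I would set $u_h=\cI_h^{3,1}u$; as in Proposition~\ref{prop:gamma_genera}(ii) one has $u_h\in\cA_h$ and $u_h\to u$ in $H^2$. The decisive point is that the discrete penalty then vanishes \emph{identically}: since $u_h$ interpolates $u$ at the nodes, $u_{3,h}(z_j)=u_3(z_j)\ge\d$, so $(u_{3,h}(z_j)-\d)_-^2=0$ at every node $z_j$ and hence the piecewise linear function $\cI_h(u_{3,h}-\d)_-^2$ is the zero function. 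Consequently $I_{h,\veps}[u_h]=\tfrac12\int_0^L|u_h''|^2\dv x\to\tfrac12\int_0^L|u''|^2\dv x=I[u]$ as $h\to0$, \emph{uniformly in $\veps$}; no coupling between $h$ and $\veps$ is required, and a routine diagonalization over the density approximation of $u$ produces a recovery sequence for every $(h_n,\veps_n)\to(0,0)$. Combining this with the $\liminf$ bound and equicoercivity, the fundamental theorem of $\G$-convergence implies that weak $H^2$-accumulation points of (quasi-)minimizers of $I_{h,\veps}$ are minimizers of $I$, which is the asserted approximation property.

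The only ingredient genuinely beyond Proposition~\ref{prop:gamma_genera} is the treatment of the inequality constraint, and the step I expect to require the most attention is the recovery sequence: a priori the penalty weight $\veps^{-1}$ could amplify even a small interpolation defect of the obstacle term and force a relation between $h$ and $\veps$. The observation that $\cI_h(u_{3,h}-\d)_-^2$ vanishes \emph{exactly} for the nodal interpolant of an admissible smooth curve removes this difficulty and makes the joint limit $(h,\veps)\to0$ unconditional; the remaining point, that coercivity here stems from the spherical constraint rather than from Assumption~\ref{ass:definite}, only needs to be recorded.
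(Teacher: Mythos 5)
Your proposal is correct and follows essentially the same route as the paper: the energy bound forces $\|(u_{3,h}-\d)_-\|_{L^2_h}^2\le 2c\veps$ so the obstacle constraint passes to the weak limit, and for the recovery sequence the nodal interpolant of a smooth admissible curve makes the discrete penalty vanish identically, so no coupling between $h$ and $\veps$ is needed. Your additional observation that coercivity here comes from the nodal sphere constraint rather than from Assumption~\ref{ass:definite} (which fails for purely periodic $L_\bc$) is a worthwhile clarification that the paper leaves implicit.
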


\begin{proof}
Certain modifications of the proof of 
Proposition~\ref{prop:gamma_genera} are required. If the sequence
$(u_h)_{h>0}$ is such that $I_{h,\veps}[u_h] \le c$ then
we have $\|(u_{3,h}-\d)_-\|_{L^2_h(0,L)}^2 \le 2 c \veps$ and every
weak accumulation point $u\in H^2(0,L;\R^3)$ satisfies $u_3\ge \d$.
Since the penalty term is nonnegative we have that 
$\liminf_{(h,\veps)\to 0} I_{h,\veps}[u_h] \ge I[u]$. For 
a curve $u\in \cA\cap C^\infty(0,L;\R^3)$ obeying
the constraint $u_3\ge \d$ we have that the interpolants 
$u_h = \cI_h^{3,1}u$ also satisfy $\cI_h u_{3,h} \ge \d$ so that
the penalty term in the functional disappears and 
the second part of the proof of Proposition~\ref{prop:gamma_genera}
applies verbatimly. 
\end{proof}


\section{Discrete gradient flows on surfaces}\label{sec:iterative}
We investigate in this section the stability of gradient flows for 
curvature energies defined on classes of arclength parametrized
curves that belong to a given 
surface. The first model uses the full bending energy, the second 
one is defined by the geodesic curvature, while the third problem 
involves an obstacle constraint. 

\subsection{Constrained elastic flow of curves}
Minimizing the bending energy of curves restricted to a surface
$S$ subject to inextensibility and boundary conditions as 
formulated in problem~\eqref{prob:bend} leads to 
gradient flows such as 
\[
\p_t u = - u^{(4)} + (\l u')' + \mu \Phi_S' (u),
\]
where $\l$ and $\mu$ are Lagrange multipliers related to 
inextensibility and surface constraints. More generally, 
given a metric $(\cdot,\cdot)_*$ defined on $L^2(0,L;\R^3)$
we consider the evolution problem
\[
(\p_t u,v)_* + (u'',v'') = 0
\]
that determines a family  $u:[0,T] \to H^2(0,L;\R^3)$ of 
curves satisfying 
\[
u(0)=u_0, \quad u(t)\in \cA
\]
for all $t\in [0,T]$. We require the test functions 
$v\in H^2(0,L;\R^3)$ to belong to the linearization of
$\cA$ at $u(t)$, i.e., that $v\in \cF[u(t)]$, where
\[
\cF[\hu] = \big\{
v\in H^2(0,L;\R^3): \Phi_S'(\hu) \cdot v = 0, \quad 
\hu'\cdot v = 0, \quad L_\bc[v]= 0
\big\}.
\]
Note that also $\p_t u(t) \in \cF[u(t)]$. To discretize
the evolution equation we use a step size $\tau>0$ and
the backward difference operator 
\[
d_t u^k = \frac{1}{\tau} (u^k-u^{k-1}).
\]
For a partition $z_0<z_1<\dots<z_J$ of $(0,L)$ we define
the discrete linearized admissible space 
\[
\cF_h [\hu_h] = \big\{ v_h \in V_h: \cI_h[\Phi_S'(\hu_h) \cdot v_h] = 0, \
\cI_h [\hu_h' \cdot v_h'] = 0, \ L_\bc[v_h] = 0 \big\},
\] 
i.e., the orthogonality relations are imposed only
at the nodes $z_0,z_z,\dots,z_J$, in accordance with the
definition of the discrete admissible set $\cA_h$. This leads to the
following algorithm. 

\begin{algorithm}[Constrained curvature flow]\label{alg:constr_curv}
Choose $u_h^0\in V_h$ such that $\cI_h \Phi_S(u_h^0) = 0$ and 
$\cI_h |[u^0]'|^2 =1$ and $L_\bc[u_h^0] = \ell_\bc$. Set $k=0$. \\
(1) Compute $d_t u_h^k \in V_h$ such that 
\[
(d_t u_h^k, v_h)_* + ([u_h^{k-1}+\tau d_t u_h^k]'',v_h'') = 0
\]
for all $v_h \in V_h$ subject to the constraints
\[
d_t u_h^k, \, v_h \in \cF_h[u_h^{k-1}].
\]
(2) Define $u_h^k = u_h^{k-1} + \tau d_t u_h^k$; set $k\to k+1$, and
continue with~(1). 
\end{algorithm}

The iteration of Algorithm~\ref{alg:constr_curv} is unconditionally 
well defined and energy
decreasing and leads to a violation of the constraints that is controlled
by the step size $\tau>0$.

\begin{proposition}\label{prop:constr_curv}
(i) Algorithm~\ref{alg:constr_curv} defines a sequence 
$(u_h^k)_{k=0,1,...} \subset V_h$ such that for every $K\ge 0$ we have
\[
I[u_h^K] + \tau \sum_{k=1}^K \|d_t u_h^k\|_*^2  \le I[u_h^0] = e_{0,h}.
\]
(ii) Assume that $u_h^0\in \cA_h$ and
that the inner product $(\cdot,\cdot)_*$ induces a norm $\|\cdot\|_*$ with
\[
\|v_h'\|_{L^\infty_h(0,L)}^2 = \|\cI_h v_h'\|_{L^\infty(0,L)}^2 \le c_* \|v_h\|_*^2
\]
for all $v_h \in V_h$ and $|\Phi_S''(s)| \le c_{S,2}(1+ |s|^r)$ for all 
$s\in \R^3$. Then, we have for every $K \ge 0$ that
\[
\max_{k=0,1,\dots,K}
\||[u_h^k]'|^2-1\|_{L_h^\infty(0,L)} \le c_* \tau e_{0,h},
\]
and
\[
\max_{k=0,1,\dots,K} \|\Phi_S(u_h^k)\|_{L_h^\infty(0,L)} 
\le c_* c_{S,2} c' \tau e_{0,h}^{r+1}.
\]
\end{proposition}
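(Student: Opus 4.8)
The plan is to establish the two parts separately, with part~(i) following from the standard energy argument for semi-implicit schemes and part~(ii) tracking the quadratic defect introduced by linearizing the constraints at the previous step.

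\textbf{Part~(i).} I would test the equation in step~(1) of Algorithm~\ref{alg:constr_curv} with $v_h = d_t u_h^k \in \cF_h[u_h^{k-1}]$, which is admissible. This gives $\|d_t u_h^k\|_*^2 + ([u_h^k]'', [d_t u_h^k]'') = 0$. Writing $[d_t u_h^k]'' = \tau^{-1}([u_h^k]'' - [u_h^{k-1}]'')$ and using the elementary identity $a\cdot(a-b) = \tfrac12|a|^2 - \tfrac12|b|^2 + \tfrac12|a-b|^2$ with $a = [u_h^k]''$, $b = [u_h^{k-1}]''$, the second term equals $\tfrac1{2\tau}(\|[u_h^k]''\|^2 - \|[u_h^{k-1}]''\|^2 + \tau^2\|[d_t u_h^k]''\|^2)$. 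Hence $\tfrac1{2\tau}(\|[u_h^k]''\|^2 - \|[u_h^{k-1}]''\|^2) + \|d_t u_h^k\|_*^2 \le 0$; multiplying by $\tau$, summing over $k = 1,\dots,K$, and recalling $I[u] = \tfrac12\|u''\|^2$ yields the claimed bound, with well-definedness coming from the fact that the bilinear form $(v_h,w_h)\mapsto (v_h,w_h)_* + \tau(v_h'', w_h'')$ is coercive on the finite-dimensional subspace $\cF_h[u_h^{k-1}]$ (which is where Assumption~\ref{ass:definite} enters, so that $\|\cdot''\|$ together with $L_\bc$ controls the iterate).

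\textbf{Part~(ii), arclength defect.} The point is that the nodal constraint $\cI_h|[u_h^{k-1}]'|^2 = 1$ is preserved only to first order: since $d_t u_h^k \in \cF_h[u_h^{k-1}]$ we have $\cI_h([u_h^{k-1}]'\cdot[d_t u_h^k]') = 0$ at every node, so expanding $|[u_h^k]'|^2 = |[u_h^{k-1}]'|^2 + 2\tau [u_h^{k-1}]'\cdot[d_t u_h^k]' + \tau^2|[d_t u_h^k]'|^2$ and applying $\cI_h$ gives, at each node $z_j$,
\[
|[u_h^k]'(z_j)|^2 - 1 = |[u_h^{k-1}]'(z_j)|^2 - 1 + \tau^2 |[d_t u_h^k]'(z_j)|^2 .
\]
Iterating from $k=0$ (where the defect vanishes by assumption $u_h^0\in\cA_h$) gives $\||[u_h^K]'|^2 - 1\|_{L^\infty_h} \le \tau^2 \sum_{k=1}^K \|[d_t u_h^k]'\|_{L^\infty_h}^2 \le \tau^2 c_* \sum_{k=1}^K \|d_t u_h^k\|_*^2 \le c_*\tau\, e_{0,h}$, where the middle inequality is the assumed norm equivalence and the last one is part~(i).

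\textbf{Part~(ii), surface defect.} Here I would Taylor-expand $\Phi_S$ at $u_h^{k-1}(z_j)$: $\Phi_S(u_h^k(z_j)) = \Phi_S(u_h^{k-1}(z_j)) + \tau\Phi_S'(u_h^{k-1}(z_j))\cdot d_t u_h^k(z_j) + \tfrac{\tau^2}2 \Phi_S''(\xi_j)[d_t u_h^k(z_j), d_t u_h^k(z_j)]$ for some $\xi_j$ on the segment. The first term vanishes by the running hypothesis $\cI_h\Phi_S(u_h^{k-1}) = 0$, the linear term vanishes because $d_t u_h^k\in\cF_h[u_h^{k-1}]$ enforces $\cI_h(\Phi_S'(u_h^{k-1})\cdot d_t u_h^k) = 0$ nodally, and the remainder is bounded using $|\Phi_S''(\xi_j)| \le c_{S,2}(1+|\xi_j|^r)$ together with an $L^\infty$ bound on the iterates. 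Iterating and summing as before produces $\|\Phi_S(u_h^K)\|_{L^\infty_h} \le c_{S,2}\tau^2 \sum_{k=1}^K (1 + \|u_h\|_{L^\infty}^r)\|d_t u_h^k\|_{L^\infty_h}^2$, and after inserting part~(i) one collects the constants into the stated bound with the factor $e_{0,h}^{r+1}$.

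\textbf{Main obstacle.} The delicate point is controlling the $L^\infty$ norm of the iterates that appears in the bound on $\Phi_S''(\xi_j)$, since a priori $\|u_h^k\|_{L^\infty}$ could grow with $k$. One must argue that it stays bounded in terms of $e_{0,h}$ and the data: the energy bound from part~(i) controls $\|[u_h^k]''\|$ uniformly in $k$, and combined with Assumption~\ref{ass:definite} (so that $\|\cdot''\|$ plus the fixed boundary data $L_\bc[u_h^k]=\ell_\bc$ bounds the full $H^2$ norm) and the embedding $H^2\hookrightarrow L^\infty$ one gets a $k$-independent $L^\infty$ bound; this is what gets absorbed into the constant $c'$ and the power $e_{0,h}^{r+1}$. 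Besides this, the remaining work — the two telescoping sums and the careful bookkeeping of $\cI_h$ acting on products of piecewise polynomials — is routine.
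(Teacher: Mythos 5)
Your proposal is correct and follows essentially the same route as the paper: testing with $v_h=d_t u_h^k$ and a binomial identity for part~(i), nodewise orthogonality plus telescoping for the arclength defect, and a nodewise Taylor expansion of $\Phi_S$ with the linearized constraint killing the first-order term for the surface defect. The two points you flag as needing care (solvability of the constrained linear system and the $k$-uniform $L^\infty$ bound on the iterates via Assumption~\ref{ass:definite}) are exactly the details the paper's proof leaves implicit, and your way of supplying them is the intended one.
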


\begin{proof}
We test the formulation of Step~(1) of Algorithm~\ref{alg:constr_curv} 
with $v_h = d_t u_h^k$ to deduce with a binomial formula that
\[
\|d_t u_h^k\|_*^2 + d_t \frac12 \|[u_h^k]''\|^2 
+ \frac{\tau}{2} \|[d_t u_h^k]''\|^2 = 0.
\]
A summation over $k=1,2,\dots,K$ yields the asserted energy estimate.
The nodewise orthogonality $[d_t u_h^k]' \cdot [u_h^{k-1}]' = 0$ and the
relation $u_h^k = u_h^{k-1}+\tau d_t u_h^k$ imply that at
every node $z\in \cN_h$ we have 
\[
|[u_h^k]'|^2 = |[u_h^{k-1}]'|^2 + \tau^2 |[d_t u_h^k]'|^2 
= \dots = 1 + \tau^2 \sum_{\ell = 1}^k |[d_t u_h^k]'|^2.
\]
The energy bound and the assumed inequality for $\|\cdot\|_*$ imply
the bound for the arclength violation. For the surface constraint we
note that the application of a Taylor formula and the fact that 
$d_t u_h^k \in \cF_h[u_h^{k-1}]$  yield that at every node we have 
\[
\Phi_S(u_h^k) = 
\Phi_S(u_h^{k-1}) + \frac12 \tau^2 \Phi_S''(\xi_h^k) [d_t u_h^k,d_t u_h^k].
\]
Repeating this argument and using $\Phi_S(u_h^0)=0$ at the nodes 
we infer with the assumed estimate for $\Phi_S''$ that 
\[
\|\cI_h \Phi_S(u_h^k)\|_{L^\infty(0,L)} \le 
\frac12 \tau c_{S,2} \big(1+\|\cI_h \xi_h^k\|_{L^\infty(0,L)}^r\big) 
\tau \sum_{\ell=1}^k \|\cI_h d_t u_h^\ell\|_{L^\infty(0,L)}^2.
\]
Since the nodal values of 
$\xi_h^k$ belongs to the line segment connecting $u_h^k$ 
and $u_h^{k-1}$ we may incorporate the discrete $L^\infty$ estimates
to deduce the estimate for the nodewise surface 
constraint violation. 
\end{proof}

\subsection{Geodesic curvature flow}
To develop an iterative scheme for the approximate solution of the 
geodesic curvature problem~\eqref{prob:geod} we follow the ideas 
used for the constrained bending problem and use that
\[
\k_g^2 = |u''|^2 - |u''\cdot n_S(u)|^2.
\]
To control the nonlinear second term by the first one, we introduce
a stabilization via a damping factor $\g_\veps = (1-\veps^2)$. This leads
to the functional
\[
I_\veps[u] = \frac12 \int_0^L |u''|^2 - \g_\veps |u''\cdot n_S(u)|^2 \dv{x}.
\]
Because of the stabilization we have the implication
\[
I_\veps[u] \le c_0 \quad \implies \quad \|u''\|^2 \le 2 c_0 \veps^{-2}.
\]
While on the continuous level the geodesic curvature of 
a curve on the surface $S$ controls the full curvature this 
is not the case for the discretization and hence necessitates the
stabilization. We assume that
\[
n_S: \R^3 \to \R^3
\]
is a $C^2$ vector field which coincides with the normal field
on $S$, i.e., we have
$n_S|_S = \frac{\Phi_S'(u)}{|\Phi_S'(u)|}$. We further
assume that $n_S$ has bounded derivatives. 
To simplify notation we use the mapping
\[
G_\veps[u] = \frac{\g_\veps}{2} \int_0^L |u''\cdot n_S(u)|^2 \dv{x}.
\]
The constrained gradient flow for $I_\veps$ can thus be 
represented as
\[
(\p_t u,v)_* + (u'',v'') =  G_\veps'[u;v],
\]
where 
\[
G_\veps'[u;v] = \g_\veps \int_0^L u'' \cdot n_S(u) 
\big(v'' \cdot n_S(u) + u''\cdot n_S'(u)v \big)\dv{x}.
\]
We note that we have
\[
G_\veps'[u;v] \le \g_\veps \big(\|u''\|\|v''\| 
+ c_{n_S} \|u''\|^2 \|v\|_{L^\infty(0,L)}\big).
\]
For ease of presentation we consider a semi-discrete setting. All
arguments carry over to the case of a spatially discrete scheme.  

\begin{algorithm}[Constrained geodesic curvature flow]\label{alg:geod}
Choose $u^0\in V$ such that $\Phi_S(u^0) = 0$ and 
$|[u^0]'|^2 =1$ and $L_\bc[u^0] = \ell_\bc$. Set $k=0$. \\
(1) Compute $d_t u^k \in V$ such that
\[
(d_t u^k, v)_* + ([u^{k-1}+\tau d_t u^k]'',v'') = G_\veps'[u^{k-1};v]
\]
for all $v \in V$ subject to the constraints
\[
d_t u^k, \, v \in \cF[u^{k-1}].
\]
(2) Define $u^k = u^{k-1} + \tau d_t u^k$; set $k\to k+1$, and
continue with~(1). 
\end{algorithm}

We have the following stability properties for Algorihm~\ref{alg:geod}.

\begin{proposition}\label{prop:stab_geod} 
Assume that there exists $c_* >0$ such that 
\[
\|v\|_{L^\infty(0,L)} + \|v''\| \le c_* \|v\|_*
\]
for all $v\in V$. \\
(i) There exists $c_3 \ge 0$ such that if $c_3 \tau \veps^{-1} \le 1/2$ then
the iterates of Algorithm~\ref{alg:geod} satisfy for all $K\ge 0$ 
\[
I_\veps[u^K] + (1-c_3 \tau \veps^{-1}) \tau \sum_{k=1}^K \|d_t u^k\|_*^2 
\le I_\veps[u^0].
\]
(ii) Under the above condition the bounds on the constraint violation errors 
apply as in Proposition~\ref{prop:constr_curv}~(ii).
\end{proposition}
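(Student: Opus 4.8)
The plan is to mimic the proof of Proposition~\ref{prop:constr_curv}, testing Step~(1) of Algorithm~\ref{alg:geod} with $v = d_t u^k$ and absorbing the extra term $G_\veps'[u^{k-1};d_t u^k]$ into the dissipation using the stabilization factor $\g_\veps = 1-\veps^2$ together with the a priori energy bound. First I would record the binomial identity obtained from the left-hand side, namely
\[
\|d_t u^k\|_*^2 + d_t \tfrac12 \|[u^k]''\|^2 + \tfrac{\tau}{2}\|[d_t u^k]''\|^2
= G_\veps'[u^{k-1};d_t u^k],
\]
which is exactly the identity used in Proposition~\ref{prop:constr_curv} but with a nonzero right-hand side. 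The essential point is that the right-hand side is \emph{not} simply $d_t G_\veps[u^k]$, so summing over $k$ does not telescope for free; the discrepancy between $G_\veps'[u^{k-1};d_t u^k]$ and $d_t G_\veps[u^k]$ is the heart of the matter.

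\emph{Handling the nonlinear term.} I would split
\[
G_\veps'[u^{k-1};d_t u^k] = d_t G_\veps[u^k] + R^k,
\]
where $R^k$ collects the consistency error coming from (a) evaluating $u''\cdot n_S(u)$ at the old time level rather than a convex-combination level and (b) the Taylor remainder in $n_S$ along the segment from $u^{k-1}$ to $u^k = u^{k-1}+\tau d_t u^k$. Since $n_S$ is $C^2$ with bounded derivatives, a Taylor expansion shows each term in $R^k$ carries a factor $\tau$ times products of $\|[d_t u^k]''\|$, $\|[u^{k-1}]''\|$, $\|d_t u^k\|_{L^\infty}$, and $\|[u^{k-1}]'']''\|$-type quantities; using the embedding $\|v''\| + \|v\|_{L^\infty} \le c_* \|v\|_*$ and the stabilization bound $\|[u^{k-1}]''\|^2 \le 2 I_\veps[u^{k-1}]\veps^{-2} \le 2 I_\veps[u^0]\veps^{-2}$ (which holds by induction from part~(i) being established up to step $k-1$), one estimates $|R^k| \le c_3 \tau \veps^{-1} \|d_t u^k\|_*^2$ for a constant $c_3$ depending on $c_*$, $c_{n_S}$, and $I_\veps[u^0]$. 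Summing the identity over $k=1,\dots,K$, telescoping the $d_t G_\veps[u^k]$ contributions into $G_\veps[u^K]-G_\veps[u^0]$, and rearranging gives
\[
I_\veps[u^K] + (1-c_3\tau\veps^{-1})\tau\sum_{k=1}^K \|d_t u^k\|_*^2 \le I_\veps[u^0],
\]
provided $c_3\tau\veps^{-1}\le 1/2$ so that the dissipation coefficient stays positive (this also closes the induction, since it shows $I_\veps[u^k]\le I_\veps[u^0]$ for all $k$, validating the bound on $\|[u^{k-1}]''\|$ used along the way). That proves~(i).

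\emph{The constraint-violation estimates (ii).} These follow exactly as in Proposition~\ref{prop:constr_curv}~(ii), essentially unchanged: the nodewise orthogonalities $[d_t u^k]'\cdot[u^{k-1}]' = 0$ and $\Phi_S'(u^{k-1})\cdot d_t u^k = 0$ encoded in $d_t u^k\in\cF[u^{k-1}]$ give the telescoping identities $|[u^k]'|^2 = 1 + \tau^2\sum_{\ell=1}^k|[d_t u^\ell]'|^2$ and $\Phi_S(u^k) = \frac12\tau^2\sum_{\ell=1}^k\Phi_S''(\xi^\ell)[d_t u^\ell,d_t u^\ell]$ at the nodes, and the dissipation sum $\tau\sum_k\|d_t u^k\|_*^2$ — now bounded by $(1-c_3\tau\veps^{-1})^{-1}I_\veps[u^0] \le 2 I_\veps[u^0]$ — controls $\sum_\ell \tau^2\|d_t u^\ell\|_{L^\infty}^2$ via the norm equivalence, yielding the same $\mathcal{O}(\tau)$ bounds with constants inflated by the harmless factor coming from $(1-c_3\tau\veps^{-1})^{-1}$.

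\emph{Main obstacle.} The delicate point is the estimate $|R^k|\le c_3\tau\veps^{-1}\|d_t u^k\|_*^2$: one must be careful that every occurrence of $\|[u^{k-1}]''\|$ is converted to a bound via the stabilization (costing a factor $\veps^{-1}$, not $\veps^{-2}$ — crucially only one such factor survives after one power is absorbed into the $\|d_t u^k\|_*^2$), and that the induction hypothesis $I_\veps[u^{k-1}]\le I_\veps[u^0]$ is legitimately available when estimating $R^k$. Getting the $\veps$-power bookkeeping right so that the smallness condition reads $c_3\tau\veps^{-1}\le 1/2$ rather than something worse is where the argument has to be done carefully rather than by analogy.
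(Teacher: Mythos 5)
Your proposal follows essentially the same route as the paper: test with $v=d_t u^k$, write $G_\veps'[u^{k-1};d_t u^k]=d_t G_\veps[u^k]-\tau\, G_\veps''[\xi^k;d_t u^k,d_t u^k]$ by a Taylor expansion of $G_\veps$, bound the remainder by $c_3\tau\veps^{-1}\|d_t u^k\|_*^2$ using the stabilization bound $\|[u^{k-1}]''\|\le \sqrt{2}\,e_0^{1/2}\veps^{-1}$ supplied by the induction hypothesis, and then telescope and absorb. The only detail you gloss over is that the remainder is evaluated at an intermediate point $\xi^k$ between $u^{k-1}$ and $u^k$, so before the refined estimate one needs a coarse a priori bound of the form $\|[u^k]''\|^2\le c\, e_0\veps^{-2}$ (the paper derives this in a preliminary step from the same tested equation via the bound on $G_\veps'$) in order to legitimately estimate $\|[\xi^k]''\|\le c\veps^{-1}$ and close the induction.
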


\begin{proof}
We argue by induction and assume that the energy estimate and
the constraint violation bounds have 
been established up to some number $k-1\ge 0$ so that 
\[
I_\veps[u^{k-1}] + \frac\tau2 \sum_{\ell=1}^{k-1} \|d_t u^\ell\|_*^2 \le I_\veps[u^0] =e_0.
\]
This implies that
\[
\|[u^{k-1}]''\|\le \sqrt{2} e_0^{1/2} \veps^{-1}.
\]
By the assumption on the boundary data we thus have
that $\|u^{k-1}\|_{H^2(0,L)} \le c_1 \veps^{-1}$. 
Moroever, we have that $\|u^{k-1}\|_{L^\infty(0,L)} \le c$. 
To derive an auxiliary bound we choose $v=d_t u^k$ in Step~(1) of 
Algorithm~\ref{alg:geod}. Incorporating the bound for $G_\veps'$
and noting $\g_\veps \le 1$ this leads to 
\[\begin{split}
\|d_t u^k\|_*^2 & + d_t \frac12 \|[u^k]''\|^2 + \frac{\tau}{2} \|[d_t u^k]''\|^2 \\
& \le \|[u^{k-1}]''\| \|[d_t u^k]''\|
+ \|[u^{k-1}]''\|^2 c_{n_S}  \|d_t u^k\|_{L^\infty(0,L)}.
\end{split}\]
By the assumption on the inner product $(\cdot,\cdot)_*$  we have that
\[
\|d_t u^k\|_{L^\infty(0,L)} + \|[d_t u^k]''\| \le c_* \|d_t u^k\|_*
\]
and we deduce that
\[
\frac12 \|d_t u^k\|_*^2 + d_t \frac12 \|[u^k]''\|^2 
\le c_1 \veps^{-2}. 
\]
Hence, by choosing $\tau$ sufficiently small, we have that
\[
\|[u^k]''\|^2 \le \|[u^{k-1}]''\|^2 + 2\tau c_1 \veps^{-2} \le 5 e_0 \veps^{-2}.
\]
We next improve the latter bound by choosing again $v=d_t u^k$ and
using
\[
G_\veps [u^k] - G_\veps[u^{k-1}] = \tau G_\veps'[u^{k-1};d_t u^k] 
+ \tau^2 G_\veps''[\xi^k;d_t u^k,d_t u^k],
\]
where $G_\veps''[\xi^k;d_t u^k,d_t u^k]$ is a formal representation of the
Taylor remainder term 
\[
\cR_{G_\veps}[u^{k-1},u^k;d_t u^k,d_t u^k]
= \int_0^1 (1-s)G_\veps''[u^{k-1}+s(u^k-u^{k-1});d_t u^k,d_t u^k] \dv{s}.
\]
With the bounds for $u^k$ and $u^{k-1}$ we obtain that
\[
\big|G_\veps''[\xi^k;d_t u^k,d_t u^k]\big| \le c_2  (1+\|[\xi^k]''\|) \|[d_t u^k]''\|^2
\le c_2' \veps^{-1} \|[d_t u^k]''\|^2.
\]
We thus obtain that 
\[\begin{split}
\|d_t u^k\|_*^2 + d_t & \frac12 \|[u^k]''\|^2 + \frac12 \|[d_t u^k]''\|^2
= G_\veps'[u^{k-1};d_t u^k] \\
&= d_t G_\veps[u^k] - \tau G_\veps''[\xi^k;d_t u^k,d_t u^k] 
\le d_t G_\veps[u^k] + \tau  c_2'  \veps^{-1} \|d_t u^k\|_*^2.
\end{split}\]
This proves the energy monotonicity and hence part~(i) of the proposition.
Part~(ii) follows as in the proof of Proposition~\ref{prop:constr_curv}.
\end{proof}

\subsection{Conical sheet indentation flow}
To iteratively solve the reduced conical sheet indentation problem~\eqref{prob:indent}
we include the obstacle condition $u_s(x)\ge \d$ via a penalty
term in the energy functional, i.e., 
\[
I_\veps[u] = \frac12 \int_{S^1} |u''|^2 \dv{x} 
+ \frac{1}{2\veps} \int_{S^1} (u_3-\d)_-^2 \dv{x},
\]
where $(s)_-=\min\{s,0\}$. 
The discretization of the related gradient flow
\[
(\p_t u,v)_* + (u'',v'') + \veps^{-1} ((u_3-\d)_-,v_3) = 0
\]
uses the convex-concave splitting 
\[
(u_3-\d)_-^2 = (u_3-\d)^2 - (u_3-\d)_+^2
\]
and an implicit treatment of the corresponding monotone and
an explicit treatment of the corresponding antimonotone terms,
i.e., we use the time-stepping scheme
\[
(d_t u^k,v)_* + ([u^k]'',v'') + \veps^{-1} (u_3^k-\d,v_3) 
= \veps^{-1} \big((u_3^{k-1}-\d)_+,v_3\big).
\]
A spatial discretization leads to the following algorithm
where periodicity is guaranteed via an appropriate definition
of the operator $L_\bc$. 

\begin{algorithm}[Conical sheet flow]\label{alg:conical_sheet}
Choose $u_h^0\in V_h$ such that $\cI_h \Phi_S(u_h^0) = 0$ and 
$\cI_h |[u_h^0]'|^2 =1$ and $\cI_h u_h^0 \ge \d$ 
and $L_\bc[u_h^0] = \ell_\bc$. Set $k=0$. \\
(1) Compute $d_t u_h^k \in V_h$ such that 
\[\begin{split}
(d_t u_h^k, v_h)_* + ([u_h^{k-1}+\tau d_t u_h^k]'',v_h'') 
+ \veps^{-1} &  (u_{h,3}^k-\d,v_{h,3})_h \\
& = \veps^{-1} \big((u_{h,3}^{k-1}-\d)_+,v_{h,3}\big)_h
\end{split}\]
for all $v_h \in V_h$ subject to the constraints
\[
d_t u_h^k, \, v_h \in \cF_h[u_h^{k-1}].
\]
(2) Define $u_h^k = u_h^{k-1} + \tau d_t u_h^k$; set $k\to k+1$, and
continue with~(1). 
\end{algorithm}

The iteration of Algorithm~\ref{alg:conical_sheet} 
has the same features as that of Algorithm~\ref{alg:constr_curv}. We use
the discrete penalized energy functional
\[
I_{h,\veps}[u_h] = \frac12 \int_{S^1} |u_h''|^2 \dv{x}
+ \frac{1}{2\veps} \int_{S^1} \cI_h (u_{h,3} -\d)_-^2 \dv{x}.
\]

\begin{proposition}\label{prop:sheet_iter}
(i) Assume that $u_h^0\in \cA_h$ with $u_{h,3}^0 \ge \d$. 
Algorithm~\ref{alg:conical_sheet} defines a sequence 
$(u_h^k)_{k=0,1,...} \subset V_h$ such that for every $K\ge 0$ we have
\[
I_{h,\veps}[u_h^K] + \tau \sum_{k=1}^K \|d_t u_h^k\|_*^2  
\le I_{h,\veps}[u_h^0] = e_{0,h}.
\]
(ii) Under the above conditions the bounds on the constraint violation errors 
apply as in Proposition~\ref{prop:constr_curv}~(ii).
\end{proposition}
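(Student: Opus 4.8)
The plan is to mirror the proof of Proposition~\ref{prop:constr_curv}, since the only new ingredient is the penalty term and it is treated by a convex--concave splitting that is designed to produce a telescoping energy identity. First I would verify that Step~(1) of Algorithm~\ref{alg:conical_sheet} is well posed: on the linear subspace $\cF_h[u_h^{k-1}]\subset V_h$ the bilinear form $(v_h,w_h)\mapsto (v_h,w_h)_* + \tau([v_h]'',[w_h]'') + \tau\veps^{-1}(v_{h,3},w_{h,3})_h$ is symmetric and positive definite (the $(\cdot,\cdot)_*$ part already being coercive on $V_h$, the other two terms being nonnegative), so the difference quotient $d_t u_h^k$ exists and is unique, and $u_h^k=u_h^{k-1}+\tau d_t u_h^k$ again lies in the affine space determined by $L_\bc[\cdot]=\ell_\bc$.

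For part~(i) I would test Step~(1) with $v_h = d_t u_h^k\in\cF_h[u_h^{k-1}]$. The $(\cdot,\cdot)_*$ term gives $\|d_t u_h^k\|_*^2$; the bending term, using $u_h^k = u_h^{k-1}+\tau d_t u_h^k$ and the binomial identity, gives $d_t\tfrac12\|[u_h^k]''\|^2 + \tfrac\tau2\|[d_t u_h^k]''\|^2$ exactly as in Proposition~\ref{prop:constr_curv}. The new contribution is $\veps^{-1}\big[(u_{h,3}^k-\d, d_t u_{h,3}^k)_h - ((u_{h,3}^{k-1}-\d)_+, d_t u_{h,3}^k)_h\big]$. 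Here I would use the elementary inequalities, valid nodewise hence after applying $\cI_h$ and integrating: for the monotone part, $(a-\d)(a-b)\ge \tfrac12(a-\d)^2 - \tfrac12(b-\d)^2$, and for the antimonotone part, $-(b-\d)_+(a-b)\ge -\tfrac12(b-\d)_+^2 + \tfrac12(a-\d)_+^2$ (this is where the convexity of $s\mapsto(s-\d)^2$ and the concavity of $s\mapsto -(s-\d)_+^2$ enter; the first is a straightforward consequence of convexity and the second follows because $s\mapsto(s-\d)_+^2$ is convex, so its negative lies below its tangent). Adding these and recalling $(s-\d)_-^2 = (s-\d)^2 - (s-\d)_+^2$ yields $\veps^{-1}\big[(u_{h,3}^k-\d,\cdot)_h - ((u_{h,3}^{k-1}-\d)_+,\cdot)_h\big]\ge d_t\tfrac1{2\veps}\|(u_{h,3}^k-\d)_-\|_{L^2_h}^2$ up to the nonnegative quadratic remainders, which can simply be dropped. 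Collecting terms gives $\|d_t u_h^k\|_*^2 + d_t I_{h,\veps}[u_h^k] \le 0$, and summing over $k=1,\dots,K$ produces the claimed estimate with $e_{0,h}=I_{h,\veps}[u_h^0]$, which is finite precisely because $u_h^0\in\cA_h$ with $u_{h,3}^0\ge\d$ (so the penalty term vanishes initially).

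For part~(ii), the constraint violation bounds for the arclength and surface constraints are untouched by the penalty term: the argument in Proposition~\ref{prop:constr_curv}~(ii) only uses the nodewise orthogonality $[d_t u_h^k]'\cdot[u_h^{k-1}]' = 0$ and $\Phi_S'(u_h^{k-1})\cdot d_t u_h^k = 0$ encoded in $\cF_h[u_h^{k-1}]$, together with the energy bound $\tau\sum_k\|d_t u_h^k\|_*^2\le e_{0,h}$ just established and the hypotheses on $(\cdot,\cdot)_*$ and $\Phi_S''$ inherited verbatim. So I would simply say that the Taylor-expansion argument and the telescoping sum $|[u_h^k]'|^2 = 1 + \tau^2\sum_{\ell\le k}|[d_t u_h^\ell]'|^2$ at each node carry over word for word. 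The main obstacle — and it is a minor one — is getting the convex--concave splitting inequality signed correctly so that the antimonotone (explicit) term still contributes a favorable telescoping quantity rather than destroying monotonicity; once the two elementary pointwise inequalities above are in hand, the rest is bookkeeping identical to the earlier proposition.
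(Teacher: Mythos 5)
Your proposal takes essentially the same route as the paper: test Step~(1) of Algorithm~\ref{alg:conical_sheet} with $v_h=d_t u_h^k$, recover the kinetic and bending terms exactly as in Proposition~\ref{prop:constr_curv}, and use the convexity of $s\mapsto (s-\d)^2$ together with the concavity of $s\mapsto -(s-\d)_+^2$ (the paper's $p_{cx}$, $p_{cv}$) to make the penalty contribution telescope into $d_t$ of the penalty energy; part~(ii) is then inherited verbatim. The one flaw is a sign slip in your second elementary inequality: with $a=u_{h,3}^k$ and $b=u_{h,3}^{k-1}$ at a node, the tangent-line bound for the convex function $s\mapsto (s-\d)_+^2$ at $b$ gives $-(b-\d)_+(a-b)\ge \tfrac12(b-\d)_+^2-\tfrac12(a-\d)_+^2$, i.e.\ the negative of the right-hand side you wrote. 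Only with this orientation does adding the two bounds produce $\tfrac12(a-\d)^2-\tfrac12(a-\d)_+^2-\tfrac12(b-\d)^2+\tfrac12(b-\d)_+^2=\tfrac12(a-\d)_-^2-\tfrac12(b-\d)_-^2$, which is what telescopes to $I_{h,\veps}$; as literally written, your inequality would instead yield $\tfrac12\big[(a-\d)^2+(a-\d)_+^2\big]-\tfrac12\big[(b-\d)^2+(b-\d)_+^2\big]$ and the energy-decay conclusion would not follow. Since your verbal justification (``its negative lies below its tangent'') produces the correct inequality and your final conclusion is the right one, this is a typo rather than a conceptual gap, and the corrected argument coincides with the paper's proof.
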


\begin{proof}
We follow the steps of the proof of Proposition~\ref{prop:constr_curv}
and use $v_h = d_t u_h^k$ in Step~(1) of 
Algorithm~\ref{alg:conical_sheet}. Defining the convex
and concave functions $p_{cx}$ and $p_{cv}$, suitably embedded
into $\R^3$, via 
\[
p_{cx}(s) = (s_3-\d)^2 e_3, \quad p_{cv}(s) = -(s_3-\d)_+^2 e_3,
\]
with the canonical basis vector $e_3\in \R^3$, we thus have
\[\begin{split}
\|d_t u^k\|_*^2 + d_t & \frac12 \|[u_h^k]''\|^2 + \frac{\tau}{2} \|[d_t u_h^k]''\|^2 \\
& = - \veps^{-1} (p_{cx}'(u_h^k),d_t u_h^k) - \veps^{-1} (p_{cv}'(u_h^{k-1}),d_tu_h^k).
\end{split}\]
The convexity of $p_{cx}$ and $-p_{cv}$ imply that we have
\[\begin{split}
p_{cx}'(u_h^k) \cdot (u_h^{k-1}-u_h^k) + p_{cx}(u_h^k) &\le p_{cx}(u_h^{k-1}), \\ 
-p_{cv}'(u_h^{k-1}) \cdot (u_h^k-u_h^{k-1}) - p_{cv}(u_h^{k-1}) &\le -p_{cv}(u_h^k).
\end{split}\]
By adding the inequalities and dividing by $\tau$ we find that 
\[
- \big(p_{cx}'(u_h^k) + p_{cv}'(u_h^{k-1})\big) \cdot d_t u_h^k 
\le - d_t \big( p_{cx}(u_h^k) + p_{cv}(u_h^k)\big).
\]
Combining the estimates implies the unconditional energy decay
property. The remaining part (ii) is derived as in the proof 
of Proposition~\ref{prop:constr_curv}.
\end{proof}

\begin{remark}
To obtain a consistency property for the discrete gradient flow
as an approximation of a corresponding continuous gradient flow
a condition relating the step-size $\tau$ and the penalty parameter
$\veps$ is required.
\end{remark}

\section{Numerical experiments}\label{sec:experiments}
We illustrate the performance of the numerical methods devised in the previous
sections by various numerical experiments which are specified in the following
subsections. The implementation of the algorithms was realized in \textsc{Matlab} 
with a direct solution of the linear systems of equations. The evolution
metric $(\cdot,\cdot)_*$ was always chosen to coincide with $L^2$ inner
product which leads to a mesh-dependent constant $c_*$ in 
Propositions~\ref{prop:constr_curv}, \ref{prop:stab_geod}, 
and~\ref{prop:sheet_iter}. We observe however good stability properties
for the resulting discrete $L^2$ flow. 

\subsection{Elastic and geodesic flows on a torus}
We compare discrete relaxation dynamics for curves on a
torus that are determined  by the
elastic bending energy and by the geodesic curvature functional.
The torus $T_{r,R}$ has radii $R=2$ and $r=1$ and is described
by the zero level set of the function 
\[
\Phi_S(s) = (|s|^2 + R^2 - r^2)^2-4R^2(|s|^2-s_3^2).
\]
The following example defines an open curve on $T_{r,R}$. 

\begin{example}
Let $\tL = 2\pi$ and for $x\in (0,\tL)$ define $\tu^0:(0,\tL)\to T_{r,R}$ via
\[
\tu^0(x) = 
\begin{bmatrix}
\sin(ax)\big(R+\sin(bx)r\big) \\ \cos(ax) \big(R+\sin(bx)r\big) \\ \cos(bx) r
\end{bmatrix}
\]
The curve $u^0:(0,L) \to T_{r,R}$ is obtained from a re-parametrization 
of $\tu^0$. 
\end{example}

We use clamped boundary conditions at $x=0$ that fix the initial
position and tangent, i.e., we have
\[
L_\bc[u] = (u(0),u'(0)).
\]
For a partition of the interval $(0,L)$ we ran Algorithms~\ref{alg:constr_curv}
and~\ref{alg:geod} with the parameters
\[
J = 80, \quad h = 2\pi /J,  \quad \tau = h, \quad \g = 1-h.
\]
Figure~\ref{fig:snaps_torus_comp} shows snapshots of the iterations.
We observe that the curve changes quicker initially in the case of
the bending energy and slightly slower for the geodesic curvature
functional. This behavior is also seen in the energy plot shown 
in Figure~\ref{fig:torus_comp_eners} where we plotted the energies
in dependence of the iteration numbers. In Figure~\ref{fig:torus_instability}
we illustrate for the evolution of closed curves on the torus
the necessity of a stabilizing damping factor 
for the geodesic curvature flow. When no stabilization is used,
i.e., in case that $\g=1$, then energy monotonicity fails and the 
discrete curves fail to belong to a small neighborhood of the
given surface. 

\begin{figure}[p]
\includegraphics[width=.52\linewidth]{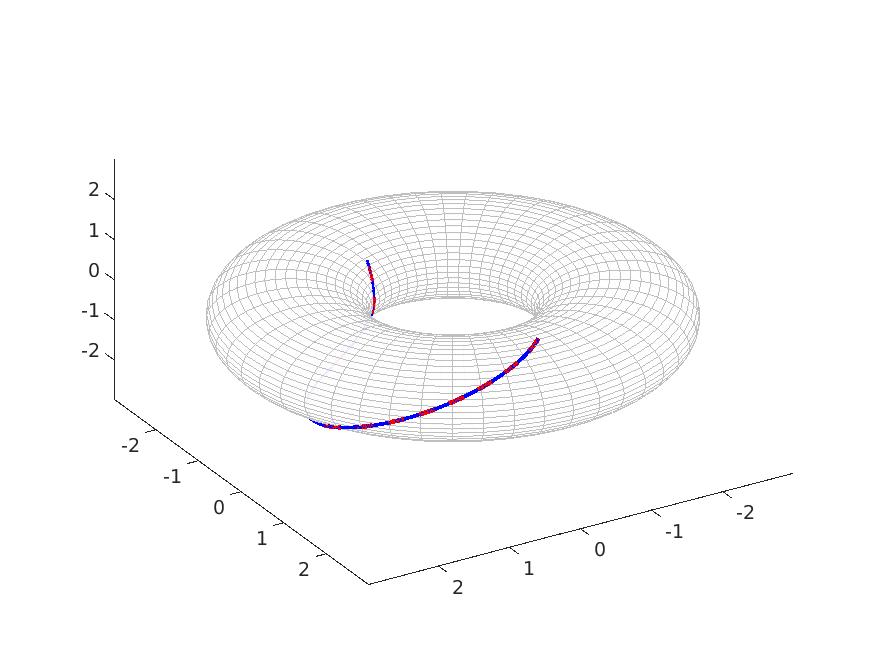}\hspace*{-8mm}
\includegraphics[width=.52\linewidth]{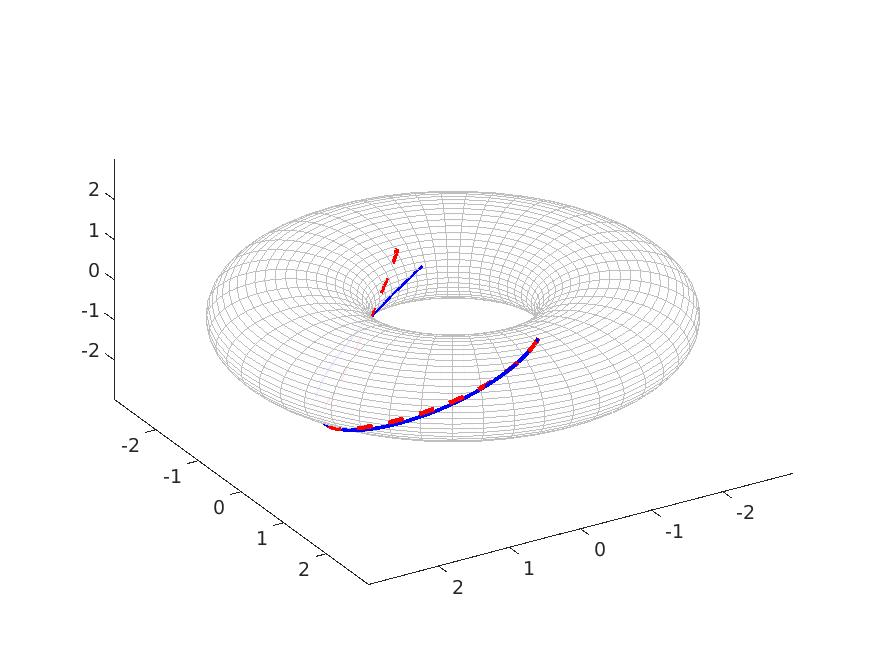} \\[-5.1mm]
\includegraphics[width=.52\linewidth]{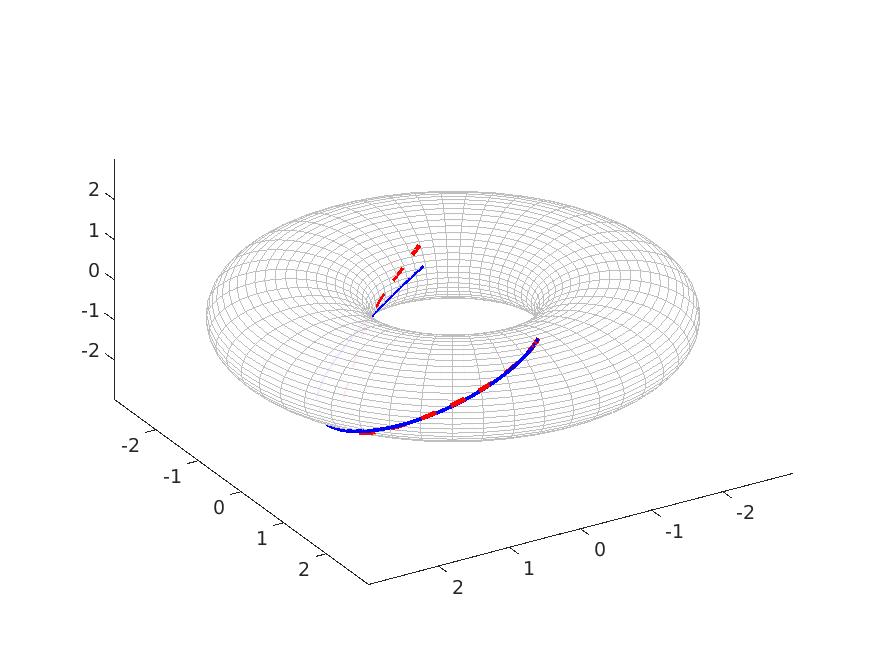}\hspace*{-8mm}
\includegraphics[width=.52\linewidth]{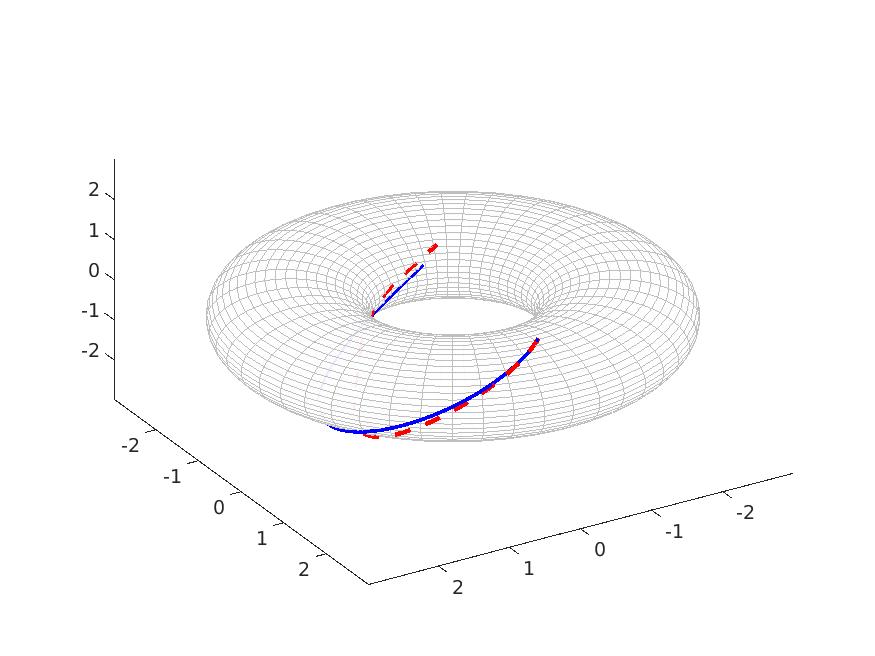} \\[-5.1mm]
\includegraphics[width=.52\linewidth]{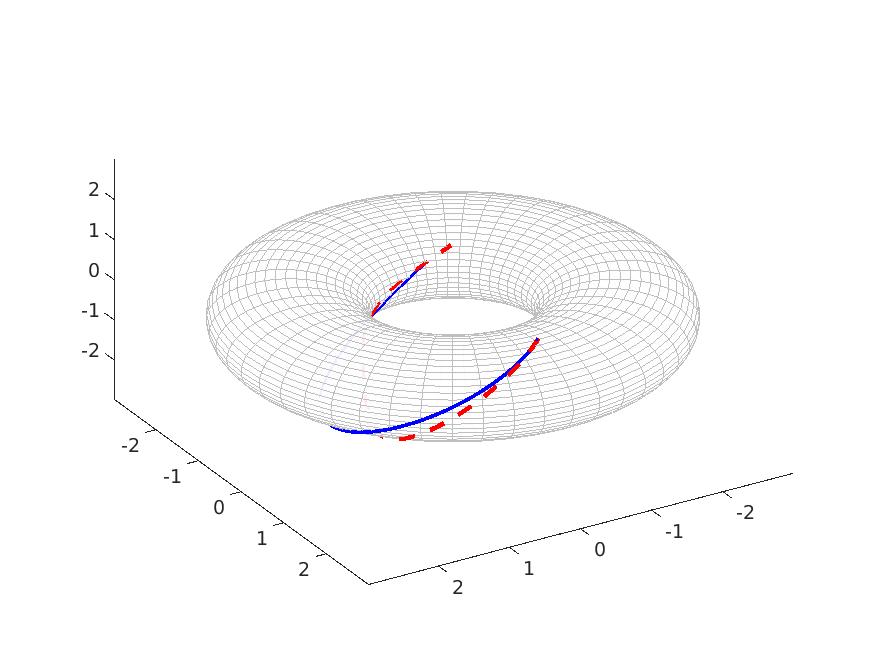}\hspace*{-8mm}
\includegraphics[width=.52\linewidth]{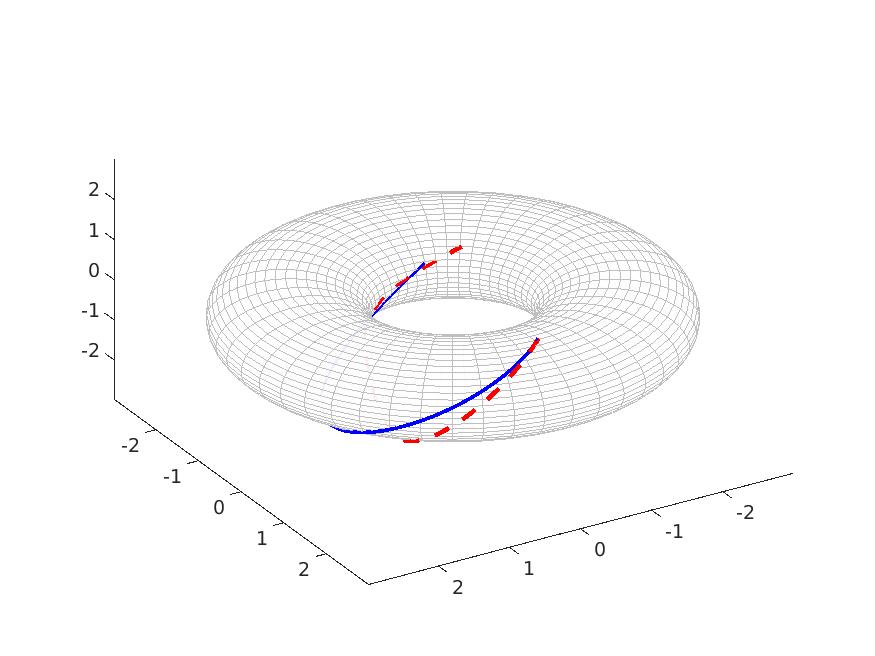} \\[-5.1mm]
\includegraphics[width=.52\linewidth]{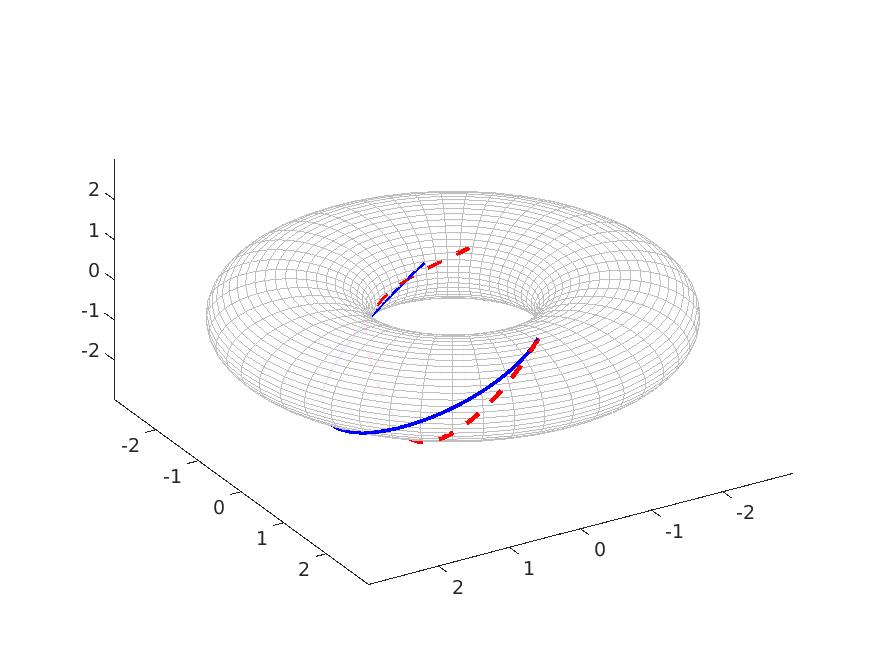}\hspace*{-8mm}
\includegraphics[width=.52\linewidth]{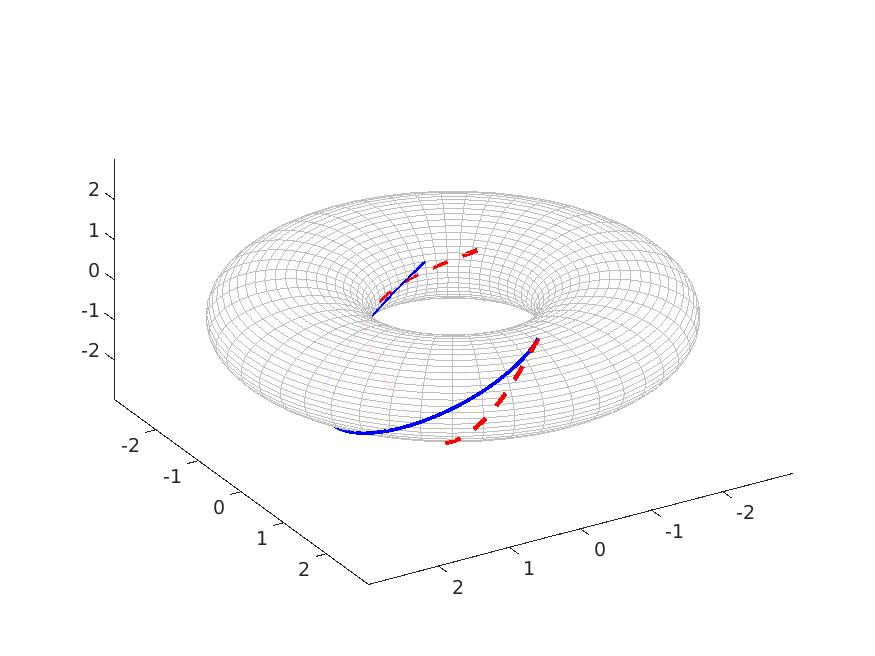} 
\caption{\label{fig:snaps_torus_comp} Snapshots of the discrete gradient flow
evolutions after $n=0,20,40,\dots,160$ iterations for the bending (solid curves) 
and geodesic curvature (dashed curves) energies from the same initial curve.}
\end{figure}

\begin{figure}[pt]
\includegraphics[width=.75\linewidth]{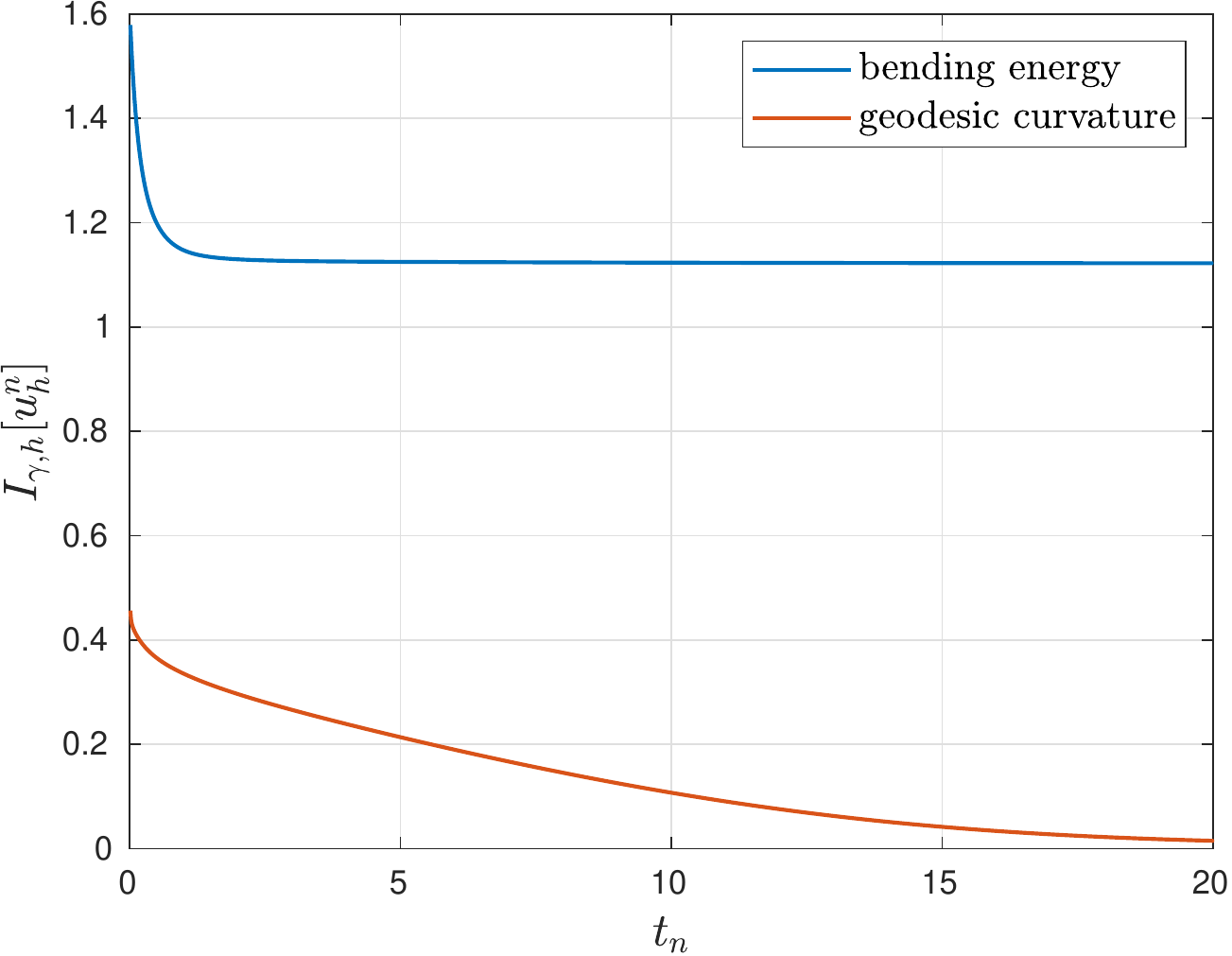}
\caption{\label{fig:torus_comp_eners} Decay of the bending energy
and geodesic curvature functional for the evolution of clamped 
curves on a torus illustrated in Figure~\ref{fig:snaps_torus_comp}.}
\end{figure}

\begin{figure}[pb]
\includegraphics[width=.8\linewidth]{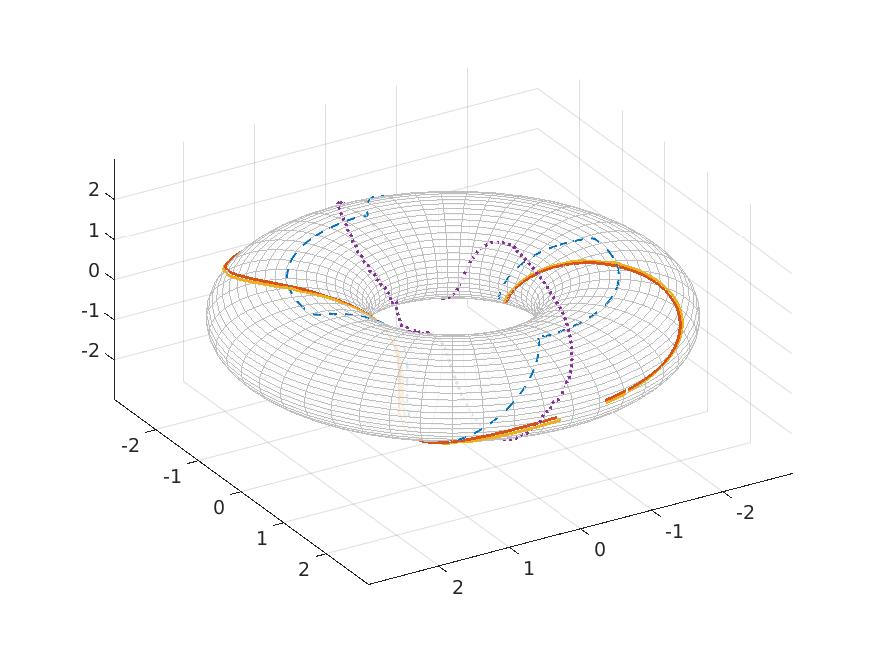}
\caption{\label{fig:torus_instability} Initial closed curve (dashed)
and corresponding relaxed curves for bending energy and geodesic
curvature with stabilization (nearly coinciding solid curves) and
configuration for geodesic curvature flow without stabilization 
(dotted irregular curve).}
\end{figure}

\subsection{Conical sheet indentation}
We consider the following specification of the conical
sheet indentation problem~\eqref{prob:indent}.

\begin{example}[Conical sheet]\label{ex:indent}
Let $\d = 1/4$ and $r= (15/16)^2$.
\end{example}

We used uniform partitions with mesh size $h>0$ and nodes 
$0=z_0<\dots < z_J=2\pi$
of the cirle $S^1$ where $z_0$ and $z_J$ are identified in the sense that
we impose the periodic boundary condition $L_\bc[u]=0$ with
\[
L_\bc[u] = \big(u(z_J)-u(z_0),u'(z_J)-u'(z_0)\big).
\]
Figure~\ref{fig:snaps_indent} shows snapshots of the discrete 
evolution computed with Algorithm~\ref{alg:conical_sheet} for
the discretization parameters
\[
J = 80, \quad h = 2\pi/J, \quad \veps = h^2, \quad \tau = h.
\]
The visualization displays the two-dimensional deformation of
the elastic sheet by linearly connecting the origin with points
on the curve. The initial configuration $u_h^0$ is a randomly 
generated function
with corrected values to satisfy the condition $u_h^0\in \cA_h$. 
The discrete evolution shows a rapid change to a smooth curve
approximately obeying the obstacle constraint. In the following iterations
the number of local maxima decreases until finally only one
fold can be observed while the remaining part of the curve
is in contact with the obstacle. Only a small penetration 
error occurs as can be seen in Figure~\ref{fig:indent_penetrate},
where we plotted the third component of the iterates $u_h^n$
with $n$ such that $t_n = n\tau = 2$, i.e., $n=160$,
 for the choices
\[
\textrm{(i)} \quad \veps = h, \quad
\textrm{(ii)} \quad \veps = h^2, \quad
\textrm{(iii)} \quad \veps = h^3.
\]
For $\veps = h$ we observe a strong penetration of the
obstacle. Our energy monotonicity property implies the estimate
\[
\|(u_{3,h}^n - \d)_-\| \le (2e_{0,h})^{1/2} \veps^{1/2}
\]
and from the experimental results we infer that $\veps =h^2$
leads to the best results. It is also interesting to see
how smaller penalization terms decrease the speed of the
relaxation process. For $\veps =h$ only one fold is present
indicating stationarity, while for $\veps = h^2$ and $\veps = h^3$
a larger number of local maxima can be observed after~160 iteration
steps. 
Figure~\ref{fig:indent_energy} shows the decay of the bending
energy for different resolutions and confirms the energy monotonicity
and convergence to a stationary configuration. The large values
of the energies are related to a strong dependence of minimal energies
on the indentation depth $\d$. For the significantly smaller
choice $\d= 0.05$ we obtained the stationary energy values
$I_{h,\veps}[u_h^{n^*}] = 24.104, 17.828, 21.822, 21.569, 21.565$ 
for discretizations with $J=40,80,\dots,640$ grid points. These
values also confirms convergence of the discrete minimal energies
as $h\to 0$. 

\begin{figure}[p]
\includegraphics[width=.47\linewidth]{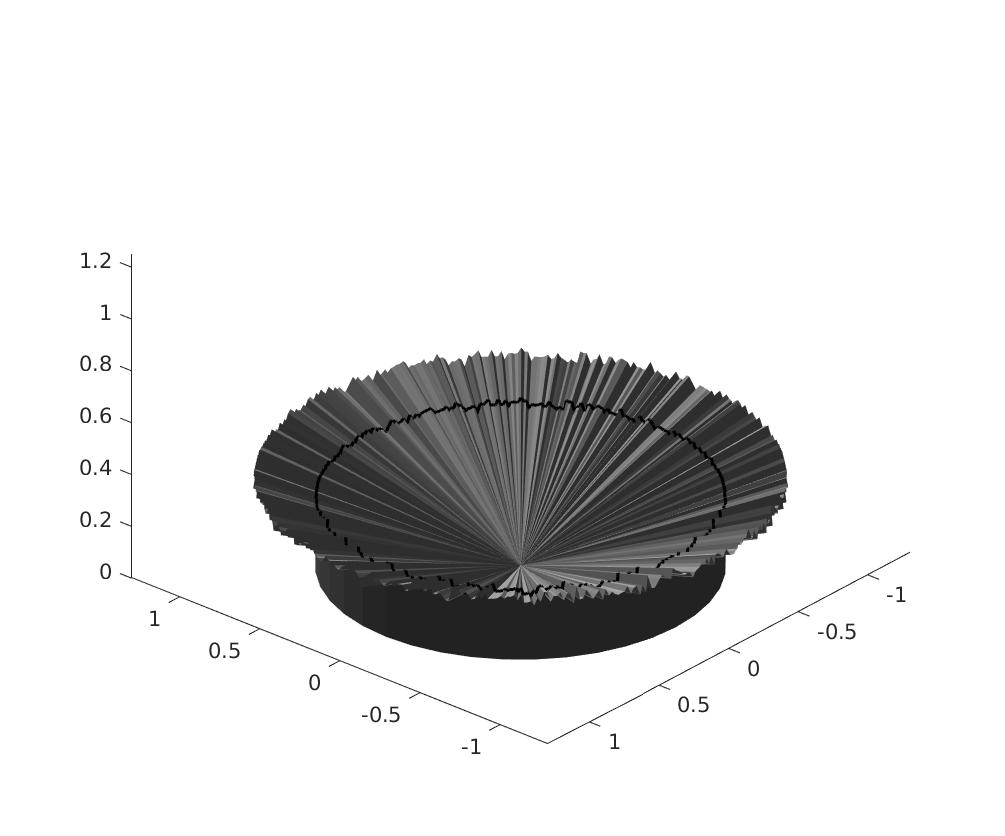}\hspace*{3mm}
\includegraphics[width=.47\linewidth]{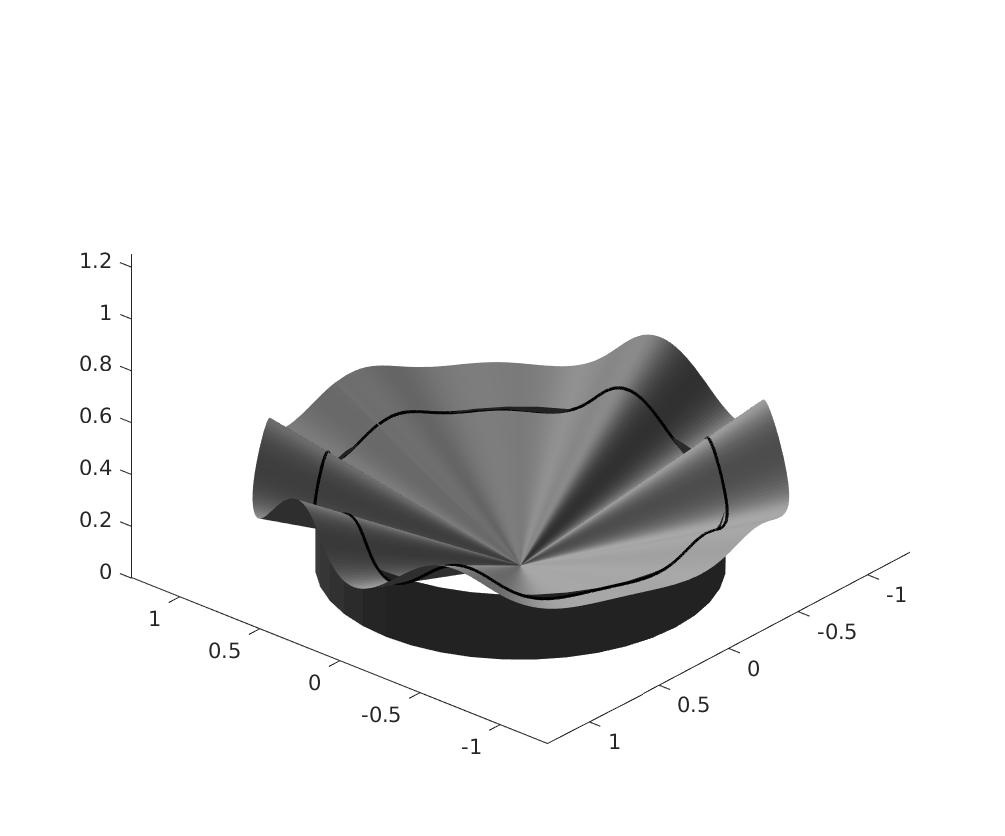} \\[-5.1mm]
\includegraphics[width=.47\linewidth]{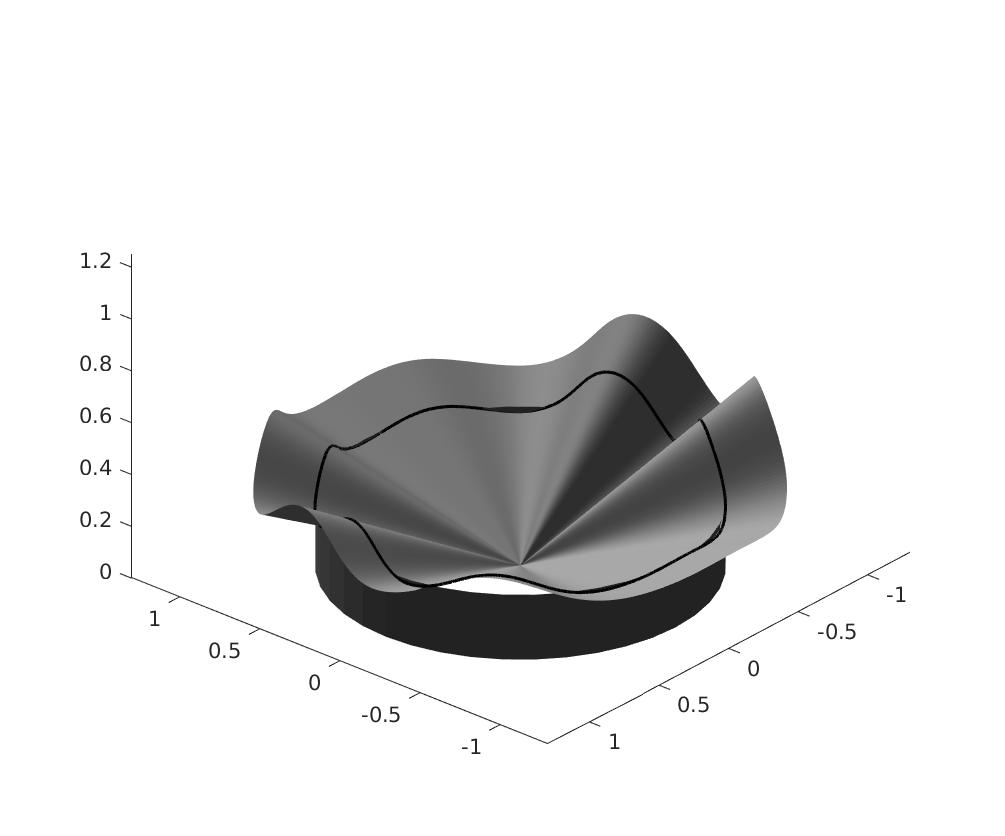}\hspace*{3mm}
\includegraphics[width=.47\linewidth]{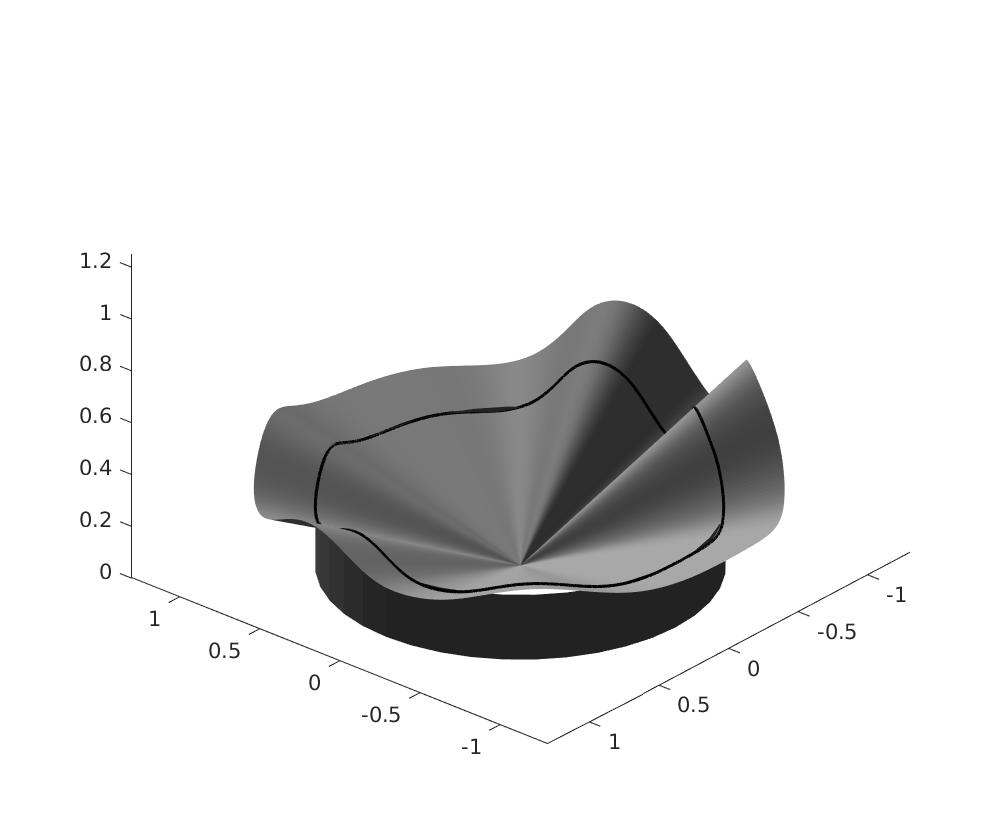} \\[-5.1mm]
\includegraphics[width=.47\linewidth]{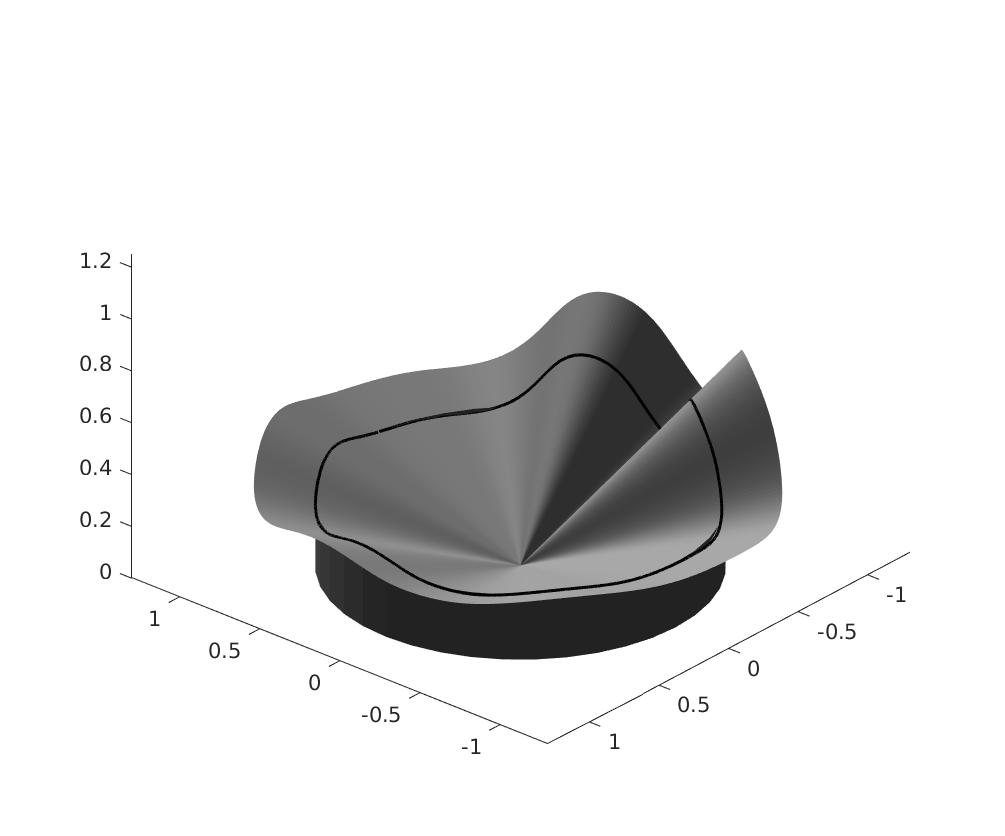}\hspace*{3mm}
\includegraphics[width=.47\linewidth]{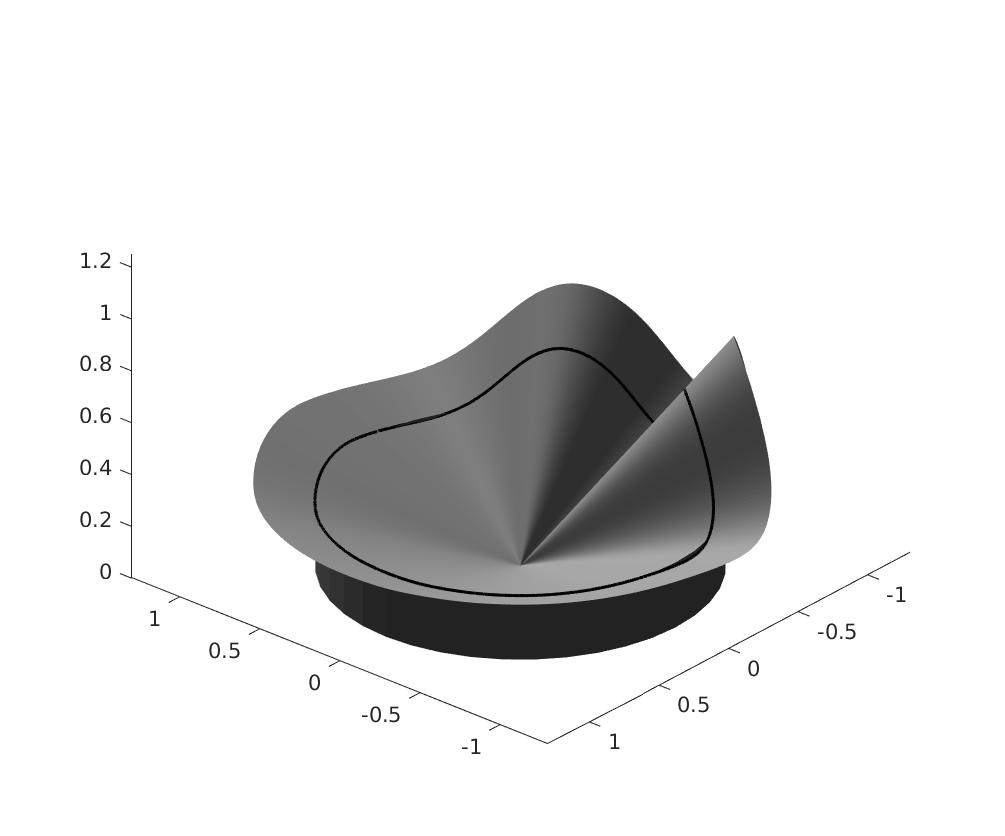} \\[-5.1mm]
\includegraphics[width=.47\linewidth]{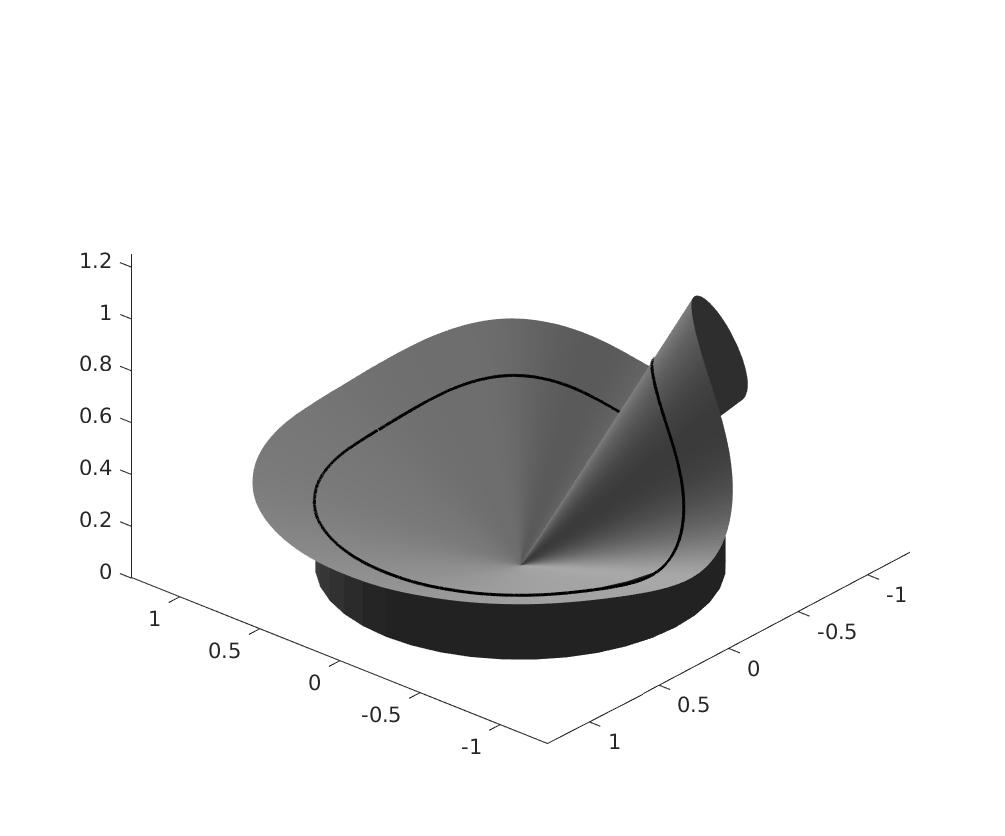}\hspace*{3mm}
\includegraphics[width=.47\linewidth]{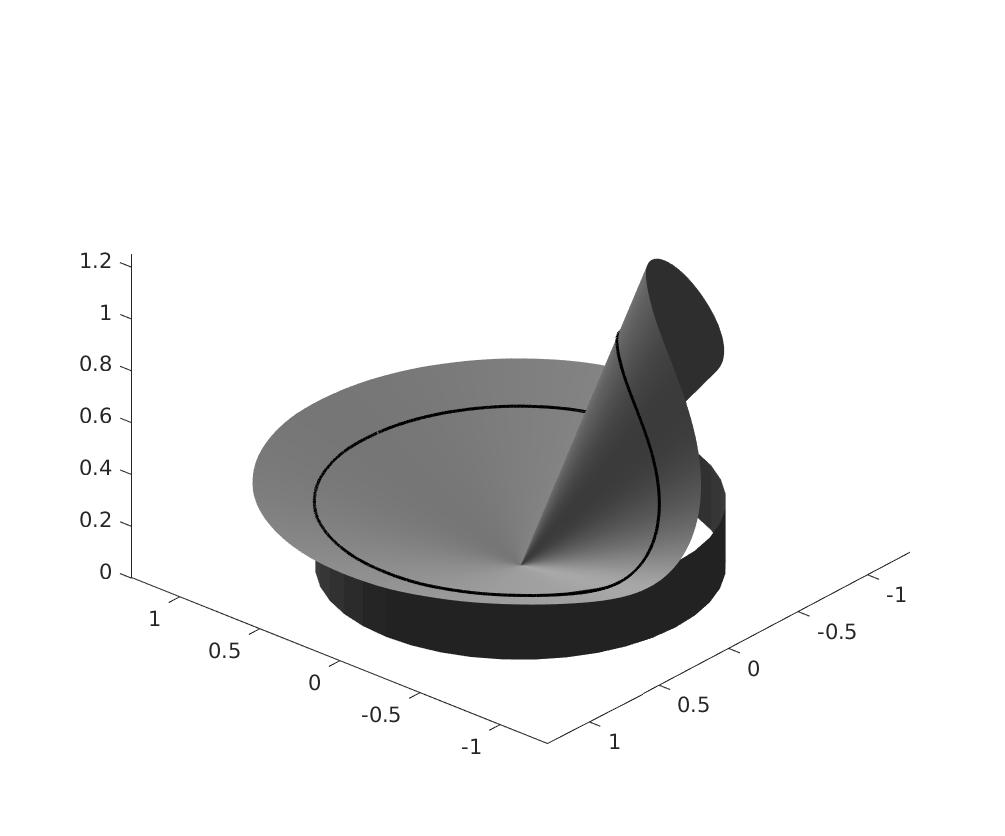} 
\caption{\label{fig:snaps_indent} Snapshots of the discrete gradient
flow for the sheet indentation problem after 
$n=0,10,20,30,40,70,190,430$ iterations. The family of 
curves (solid lines) relax their bending energies until only one fold 
is present which is stationary and energy minimizing.}
\end{figure}

\begin{figure}[pt]
\includegraphics[width=.75\linewidth]{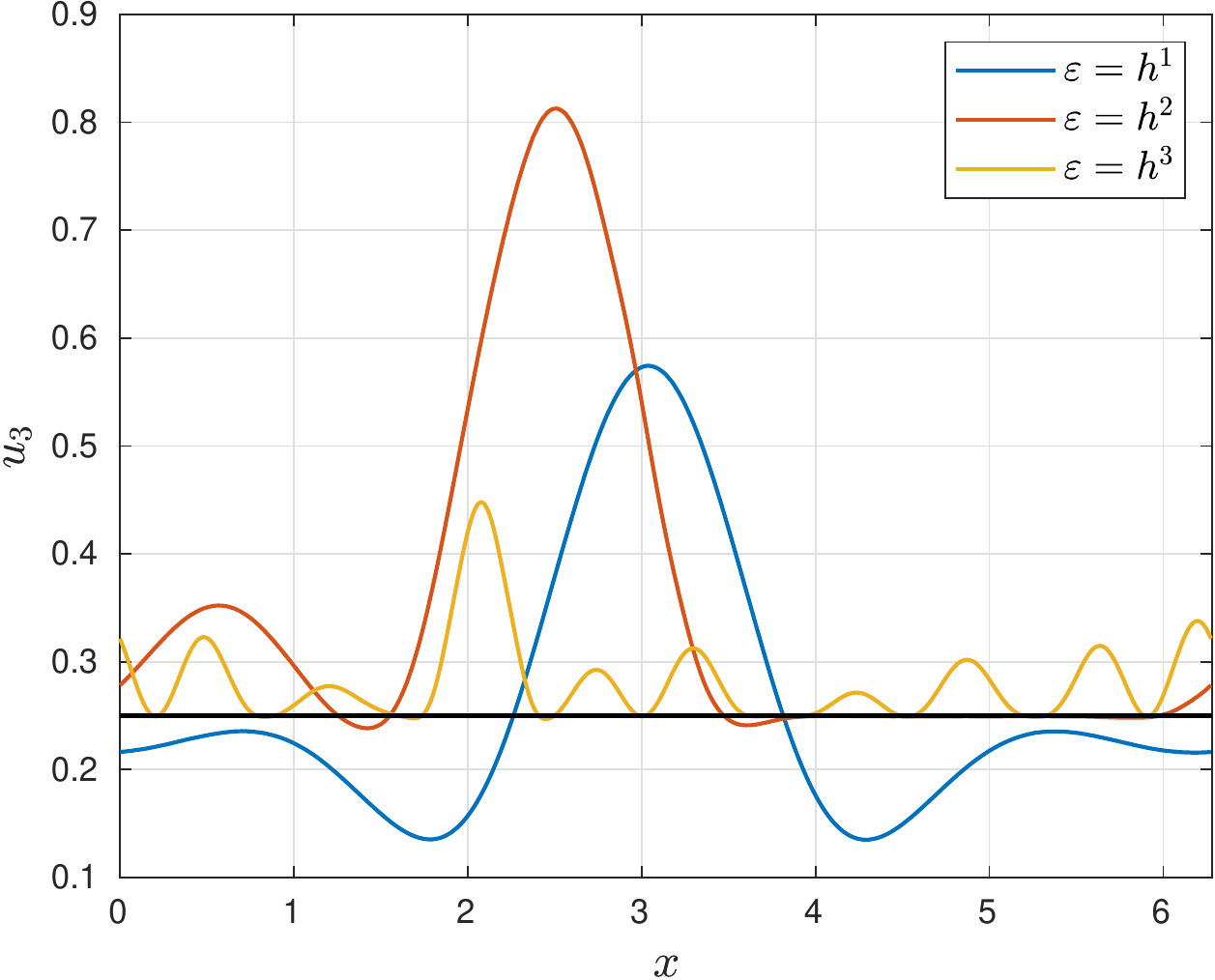}
\caption{\label{fig:indent_penetrate} Penetration of the obstacle
at height $\d = 0.25$ (straight line) by the third component $u_{3,h}^n$ 
of the iterates in the sheet indentation problem 
after a fixed number $n=160$ of iterations 
for different choices of penalty parameters $\veps$.}
\end{figure}

\begin{figure}[pb]
\includegraphics[width=.75\linewidth]{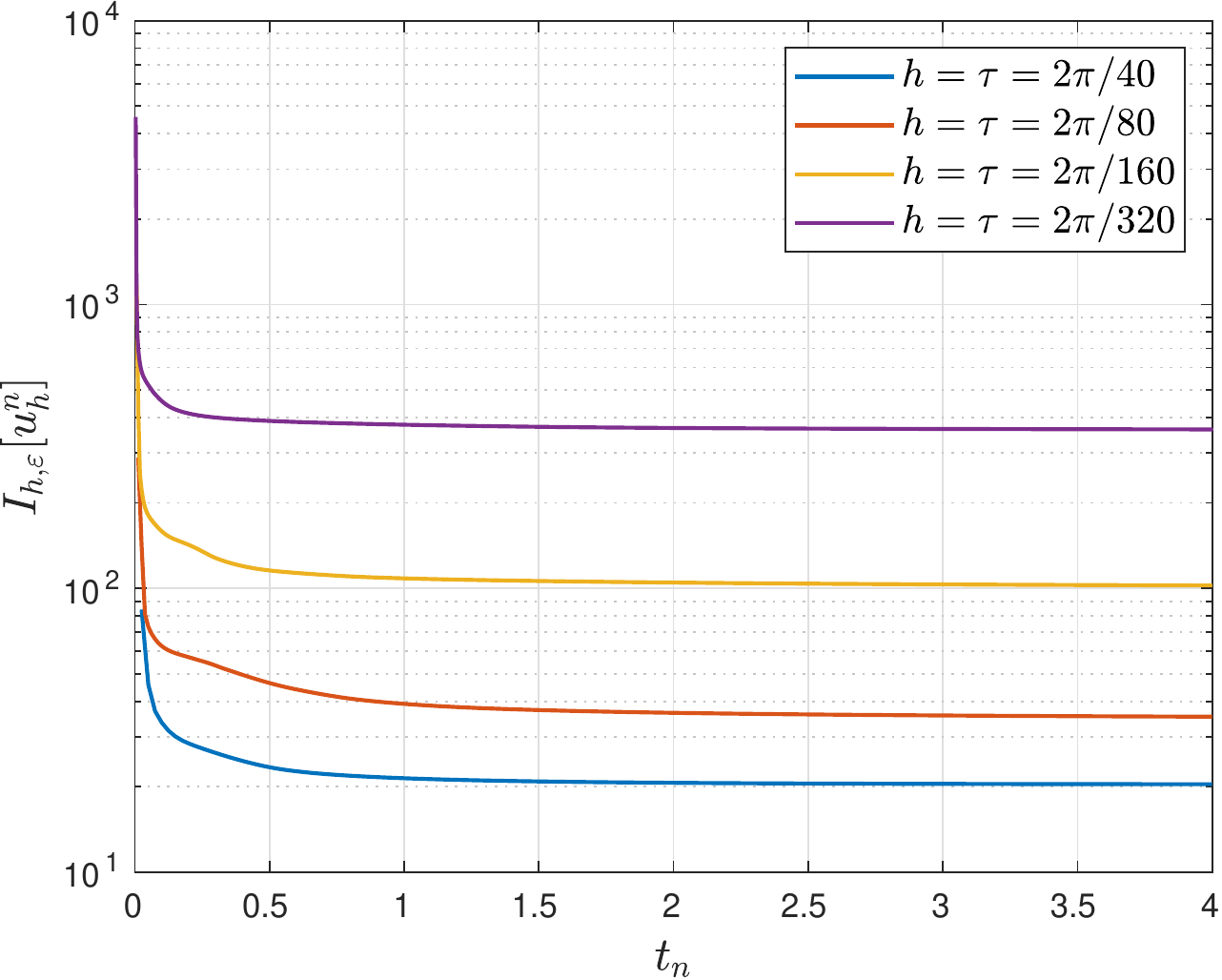}
\caption{\label{fig:indent_energy} Energy decay $n\mapsto I_{h,\veps}[u_h^n]$
in the 
sheet indentation problem for different spatial resolutions
for mesh-dependent randomly generated initial configurations
of large bending energy.}
\end{figure}

\medskip
\noindent
{\em Acknowledgments.} The author is grateful to Rebecca Kromer for 
providing first versions of the numerical experiments. 

\medskip

\section*{References}
\printbibliography[heading=none]

\end{document}